\documentclass[reqno]{amsart}
\usepackage{amscd}
\usepackage{amssymb}
\usepackage[margin=1in,includeheadfoot]{geometry}
\usepackage{cite}
\usepackage [latin1]{inputenc}
\usepackage{tabularx,booktabs,tikz}
\usepackage{caption}
\usepackage{amsmath}
\usepackage{amsfonts}

\usepackage{amscd}
\usepackage{amsthm}
\usepackage{amssymb} \usepackage{latexsym}
\usepackage{eufrak}
\usepackage{euscript}
\usepackage{epsfig}
\usepackage{graphics}
\usepackage{array}
\usepackage{enumerate}
\usepackage{dsfont}
\usepackage{color}
\usepackage{wasysym}
\usepackage{hyperref}
\usepackage{pdfsync}

\textwidth 6.2in \textheight 8.5in \oddsidemargin -0.01truein
\topmargin -.2truein

\def\beq{\begin{equation}}
\def\eeq{\end{equation}}
\def\ba{\begin{array}}
\def\ea{\end{array}}

\def\R{\mathbb R}







\newtheorem{thm}{Theorem}[section]
\newtheorem{lm}[thm]{Lemma}
\newtheorem{prop}[thm]{Proposition}

\theoremstyle{definition}
\newtheorem{rem}[thm]{Remark}

\theoremstyle{remark}

\begin{document}
\pagestyle{plain}
\title{Solutions to discrete nonlinear Kirchhoff-Choquard equations with power nonlinearity}

\author{Lidan Wang}
\email{wanglidan@ujs.edu.cn}
\address{Lidan Wang: School of Mathematical Sciences, Jiangsu University, Zhenjiang 212013, People's Republic of China}

\begin{abstract}
In this paper, we study the following Kirchhoff-Choquard equation
$$
-\left(a+b \int_{\mathbb{Z}^3}|\nabla u|^{2} d \mu\right) \Delta u+h(x) u=\left(R_{\alpha}\ast|u|^{p}\right)|u|^{p-2}u,\quad x\in \mathbb{Z}^3,
$$
where $a,\,b>0$, $\alpha \in(0,3)$ are constants and $R_{\alpha}$ is the Green's function of the discrete fractional Laplacian that behaves as the Riesz potential. Under some suitable assumptions on potential function $h$, for $p>2$, we first establish the existence of ground state
solutions  based on the Nehari manifold. Subsequently, for $p>4$, we obtain the existence of ground state sign-changing solutions
by adopting constrained minimization arguments on the sign-changing Nehari manifold.
\end{abstract}

\maketitle

{\bf Keywords:}  lattice graphs, nonlinear Kirchhoff-Choquard equations, existence, ground state sign-changing solutions, Nehari manifold

\
\

{\bf Mathematics Subject Classification 2020:} 35J20, 35J60, 35R02.

\section{Introduction}

The Kirchhoff-type equation
\begin{equation}\label{z}
    -\left(a+b \int_{\mathbb{R}^3}|\nabla u|^{2} d \mu\right) \Delta u+h(x) u=g(x,u),\quad u\in H^1(\mathbb{R}^3),
\end{equation}
where $a,\,b>0$, has drawn lots of interest in recent years due to the appearance of $(\int_{\mathbb{R}^3}|\nabla u|^{2}\,d \mu)\Delta u$. See for examples \cite{CT,CT1,G,HZ,LY,LL} for the existence of ground state solutions to this equation, and \cite{FT,LL1,WT,WZ,YM,YT,ZW} for the existence of ground state sign-changing solutions.  Moreover, for the equations with logarithmic nonlinearity $g(x,u)=|u|^{p-2}u\log u^2$, we refer the readers to \cite{FW,GJ,HG,WTC}.

In many physical applications, the Choquard-type nonlinearity $g(x,u)=\left(I_\alpha \ast F(u)\right)f(u)$  appears naturally, where $I_\alpha$ is the Riesz potential. In particular, for $a=1$ and $b=0$, the equation (\ref{z}) turns into the well known Schr\"{o}dinger-Newton
equation, which has been studied extensively in the literature, see for examples \cite{GV,L0,L1,MV2,MV3}. Recently, for $\alpha\in (1, 3)$, Zhou and Zhu \cite{ZZ1} proved the existence of ground state solutions. Liang et al. \cite{LS} obtained the existence of multi-bump solutions. For $\alpha\in(0,3)$, Chen, Zhang and Tang \cite{CZ} proved the existence of ground state solutions under some hypotheses on $h$ and $f$. L\"{u} and Dai \cite{LD} established the existence and asymptotic behavior of ground state solutions by a Pohozaev-type constraint technique. Hu et al. \cite{H} obtained two classes of ground state solutions under the general Berestycki-Lions conditions on $f$. For $f(u)=|u|^{p-2}u$ with $p\in(2,3+\alpha)$, L\"{u} \cite{L} demonstrated the existence and asymptotic behavior 
of ground state solutions by the
Nehari manifold and the concentration compactness principle.
For more related works about the Choquard-type nonlinearity, we refer the readers to \cite{GT,HT,LP,LL,YG}.

Nowadays, many researchers  turn to study  differential equations on graphs, especially for the nonlinear elliptic equations. See for examples \cite{GLY,HSZ,HLW,HX,W2,ZZ} for the  discrete nonlinear  Schr\"{o}diner equations. For the  discrete nonlinear logarithmic equations, we refer the readers to \cite{CWY,CR,HJ,OZ,SY}. For the equations with Choquard-type nonlinearity, one can see \cite{LW,LZ1,LZ,W1}. Recently, L\"{u} \cite{L3} proved the existence of ground state solutions for a class of Kirchhoff equations on lattice graphs $\mathbb{Z}^3$. Later, Pan and Ji \cite{PJ} obtained the existence and convergence of ground state sign-changing solutions for a class of Kirchhoff equations on locally finite graphs. Very recently, Wang \cite{W4} proved the existence and convergence of ground state sign-changing solutions for nonlinear Kirchhoff equations with logarithmic nonlinearity on $\mathbb{Z}^3$. Moreover, Wang \cite{W3} established the existence of nontrivial solutions and ground
state solutions for Kirchhoff equations with general convolution on lattice graphs $\mathbb{Z}^3$. To the best of our knowledge, there is no existence results for the Kirchhoff-Choquard equations with power nonlinearity on graphs. Motivated by the works mentioned above, in this paper, we would like to study the Kirchhoff-type equations with power convolution nonlinearity on lattice graphs $\mathbb{Z}^3$ and discuss the existence of ground state solutions and ground state sign-changing solutions under different conditions on the potential function $h$.

Let us first give some notations. Let $C(\mathbb{Z}^{3})$ be the set of functions on $\mathbb{Z}^{3}$ and $C_{c}(\mathbb{Z}^{3})$ the set of functions on $\mathbb{Z}^{3}$ with finite support. We denote by $\ell^p(\mathbb{Z}^3)$ the space of $\ell^p$-summable functions on $\mathbb{Z}^3$. Moreover, for $u\in C(\mathbb{Z}^{3})$, we always write $
\int_{\mathbb{Z}^{3}} f(x)\,d \mu=\sum\limits_{x \in \mathbb{Z}^{3}} f(x),
$
where $\mu$ is the counting measure in $\mathbb{Z}^{3}$.

In this paper, we consider the following Kirchhoff-Choquard equation
\begin{equation}\label{aa}
-\left(a+b \int_{\mathbb{Z}^3}|\nabla u|^{2} d \mu\right) \Delta u+h(x) u=(R_{\alpha}\ast|u|^{p})|u|^{p-2}u,\quad x\in \mathbb{Z}^3, 
\end{equation}
where $a, b>0$ are constants, $\alpha\in(0,3)$ and $R_{\alpha}$ represents the Green's function of the discrete fractional Laplacian, see \cite{MC,W1}, $$ R_{\alpha}(x,y)=\frac{K_{\alpha}}{(2\pi)^3}\int_{\mathbb{T}^3}e^{i(x-y)\cdot k}\mu^{-\frac{\alpha}{2}}(k)\,dk,\quad y\in \mathbb{Z}^3,$$
which contains the fractional degree
$$K_\alpha=\frac{1}{(2\pi)^3}\int_{\mathbb{T}^3}\mu^{\frac{\alpha}{2}}(k)\,dk,\,\mu(k)=6-2\sum\limits_{j=1}^3 \cos(k_j),$$
where $\mathbb{T}^3=[0,2\pi]^3,\,k=(k_1,k_2,k_3)\in\mathbb{T}^3.$ Clearly, the Green's function $R_{\alpha}$ has no singularity at $x=y$. According to \cite{MC}, the Green's function $R_\alpha$  behaves as $|x-y|^{\alpha-3}$ for $|x-y|\gg 1$. 
Here $\Delta u(x)=\underset {y\sim x}{\sum}(u(y)-u(x))$ and $|\nabla u(x)|=\left(\frac{1}{2} \sum\limits_{y \sim x}(u(y)-u(x))^{2}\right)^{\frac{1}{2}}.$ 

Now we give assumptions on the potential function $h$:
\begin{itemize}
\item[($h_1$)] for any $x\in\mathbb{Z}^3$, there exists $h_0>0$ such that $h(x) \geq h_0$;
\item[($h_2$)] $h(x)$ is $\tau$-periodic in $x\in\mathbb{Z}^3$ with $\tau\in\mathbb{Z};$
\item[($h_3$)] there exists a point $x_0\in\mathbb{Z}^3$ such that $h(x)\rightarrow\infty$ as $|x-x_0|\rightarrow\infty.$
\end{itemize}

Let $H^{1}(\mathbb{Z}^3)$ be the completion of $C_c(\mathbb{Z}^3)$ with respect to the norm
\begin{eqnarray*}
\|u\|_{H^{1}}=\left(\int_{\mathbb{Z}^3}\left(|\nabla u|^2+u^2\right)\,d\mu\right)^{\frac{1}{2}}.
\end{eqnarray*}
Let $h(x) \geqslant h_{0}>0$, we introduce a new subspace
$$
H=\left\{u \in H^{1}(\mathbb{Z}^{3}): \int_{\mathbb{Z}^{3}} h(x) u^{2}\,d \mu<\infty\right\}
$$
with the norm
$
\|u\|=\left(\int_{\mathbb{Z}^{3}}\left(a|\nabla u|^{2}+h(x) u^{2}\right) d \mu\right)^{\frac{1}{2}},
$
where $a$ is a positive constant. The space $H$ is a Hilbert space with the inner product
$$
\langle u, v\rangle=\int_{\mathbb{Z}^3}\left(a\nabla u \nabla v+h(x) u v\right)\, d \mu.
$$
Since $h(x)\geq h_0>0$, we have $$\|u\|_2^2\leq \frac{1}{h_0}\int_{\mathbb{Z}^3} h(x)u^2(x)\,d\mu\leq \frac{1}{h_0}\|u\|^2.$$ Then for any $u \in H$ and $q\geq 2$, we have
\begin{equation}\label{ac}
\|u\|_{q} \leq \|u\|_{2}\leq C \|u\|.
\end{equation}

The energy functional $J(u): H\rightarrow\R$ associated to the equation (\ref{aa}) is given by
$$
J(u)=\frac{1}{2} \int_{\mathbb{Z}^{3}}\left(a|\nabla u|^{2}+h(x) u^{2}\right) d \mu+\frac{b}{4}\left(\int_{\mathbb{Z}^{3}}|\nabla u|^{2} d \mu\right)^{2}-\frac{1}{2p}\int_{\mathbb{Z}^{3}}(R_{\alpha}\ast|u|^p)|u|^p\,d\mu.
$$
By the discrete Hardy-Littlewood-Sobolev inequality (see Lemma \ref{lm1} below), one gets easily that $J(u)\in C^1(H,\mathbb{R})$ and, for any $\phi\in H$, 
$$
\left(J^{\prime}(u), \phi\right)=\int_{\mathbb{Z}^{3}}(a \nabla u \nabla \phi+h(x) u \phi)\,d \mu+b \int_{\mathbb{Z}^{3}}|\nabla u|^{2}\,d \mu \int_{\mathbb{Z}^{3}} \nabla u \nabla \phi\,d \mu-\int_{\mathbb{Z}^{3}}(R_{\alpha}\ast|u|^p)|u|^{p-2}u\phi \,d\mu.
$$
We say that $u\in H$ is a weak solution of the equation (\ref{aa}) if $u$ is a critical point of $J$, i.e. $J'(u)=0$. We define the Nehari manifold for the corresponding energy functional $J$
$$\mathcal{N}=\left\{u \in H \backslash\{0\}: (J^{\prime}(u),u)=0\right\},$$
and the sign-changing Nehari manifold
$$\mathcal{M}=\left\{u \in H: u^{ \pm} \neq 0 \text { and } (J^{\prime}(u), u^{+})=(J^{\prime}(u),u^{-})=0\right\},$$
where $u^{+}=\max \{u, 0\}$ and $u^{-}=\min \{u, 0\}.$
Moreover, we denote
$$c=\inf\limits_{u\in\mathcal{N}}J(u)\quad \text{and}\quad m=\inf\limits_{u\in \mathcal{M}}J(u).$$
We say that $u\in H$ is a ground state solution if
$J(u)=c$ and a ground state sign-changing solution if $J(u)=m.$

Now we state our main results.

\begin{thm}\label{t2}
 Let $p>2$. Assume that $(h_1)$ and $(h_2)$ hold. Then the equation (\ref{aa}) has a ground state solution.
    
\end{thm}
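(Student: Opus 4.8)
The plan is to realize $c=\inf_{\mathcal N}J$ as an attained minimum over the Nehari manifold by the standard fibering and constrained-minimization scheme, handling the non-compactness induced by the periodicity $(h_2)$ through a translation (concentration-compactness) argument. First I would study the fibering map $t\mapsto J(tu)$ for fixed $u\in H\setminus\{0\}$. With
$$J(tu)=\frac{t^2}{2}\|u\|^2+\frac{b\,t^4}{4}\Big(\int_{\mathbb{Z}^3}|\nabla u|^2\,d\mu\Big)^2-\frac{t^{2p}}{2p}\int_{\mathbb{Z}^3}(R_\alpha\ast|u|^p)|u|^p\,d\mu,$$
the condition $tu\in\mathcal N$ reduces, after dividing $\tfrac{d}{dt}J(tu)=0$ by $t^2$, to $\|u\|^2+b\,t^2(\int|\nabla u|^2)^2=t^{2p-2}\int(R_\alpha\ast|u|^p)|u|^p$. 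Since $p>2$ forces $2p-2>2$, the right-hand side grows strictly faster than the left, and a monotonicity check produces a unique $t(u)>0$ with $t(u)u\in\mathcal N$, at which $J(tu)$ attains its global maximum over $t>0$.

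Next I would establish coercivity and a positive lower bound for $c$. For $u\in\mathcal N$ the identity $\|u\|^2+b(\int|\nabla u|^2)^2=\int(R_\alpha\ast|u|^p)|u|^p$ gives
$$J(u)=J(u)-\tfrac{1}{2p}(J'(u),u)=\Big(\tfrac12-\tfrac1{2p}\Big)\|u\|^2+\Big(\tfrac14-\tfrac1{2p}\Big)b\Big(\int_{\mathbb{Z}^3}|\nabla u|^2\,d\mu\Big)^2,$$
and both coefficients are positive because $p>2$; hence $J\ge(\tfrac12-\tfrac1{2p})\|u\|^2$ on $\mathcal N$. Combining the Nehari identity with the discrete Hardy--Littlewood--Sobolev inequality (Lemma \ref{lm1}) and the embedding (\ref{ac}) yields $\|u\|^2\le\int(R_\alpha\ast|u|^p)|u|^p\le C\|u\|^{2p}$, so $\|u\|\ge\delta>0$ uniformly on $\mathcal N$ and therefore $c>0$. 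A minimizing sequence $\{u_n\}\subset\mathcal N$ with $J(u_n)\to c$ is then bounded in $H$, and by Ekeland's variational principle on the $C^1$-manifold $\mathcal N$ I may assume it is a Palais--Smale sequence, $J'(u_n)\to0$.

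The compactness step is where $(h_2)$ enters. Because $u_n\in\mathcal N$, the nonlinear term is bounded below by $\delta^2$, so the sequence cannot vanish; a discrete Lions-type lemma yields $\eta>0$, $R>0$ and points $y_n$, which by $\tau$-periodicity may be taken in $\tau\mathbb{Z}^3$ (up to a bounded shift), such that $\int_{B(y_n,R)}u_n^2\,d\mu\ge\eta$. Setting $\tilde u_n(\cdot)=u_n(\cdot-y_n)$, the invariance of $h$ gives $J(\tilde u_n)=J(u_n)\to c$, $J'(\tilde u_n)\to0$ and $\tilde u_n\in\mathcal N$, while the mass is now trapped near the origin. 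Passing to a subsequence, $\tilde u_n\rightharpoonup u$ in $H$; since point evaluation is bounded on $H$ (as $|u(x)|\le\|u\|_2\le C\|u\|$), weak convergence forces pointwise convergence, and the mass bound gives $u\ne0$.

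The main obstacle is to pass to the limit in the two nonlocal terms and to confirm that $u$ is a critical point of the \emph{original} functional. Testing $(J'(\tilde u_n),\phi)=o(1)$ against $\phi\in C_c(\mathbb{Z}^3)$ and using pointwise convergence together with the decay $R_\alpha(x,y)\sim|x-y|^{\alpha-3}$ controls the Choquard term, but the Kirchhoff coefficient $b\int|\nabla\tilde u_n|^2$ only converges to some $A_0\ge\int|\nabla u|^2$, so a priori $u$ solves (\ref{aa}) with $a+bA_0$ in place of $a+b\int|\nabla u|^2$. To close this gap I would invoke the discrete Brezis--Lieb lemma for $\|\cdot\|^2$, for the quartic Kirchhoff term and for $\int(R_\alpha\ast|\cdot|^p)|\cdot|^p$, writing $v_n=\tilde u_n-u\rightharpoonup0$ and splitting both the energy and the Nehari identity. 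This exhibits $v_n$ as a Palais--Smale sequence for the limiting problem; together with $J(u)\ge c$ (once $u$ is shown to lie on $\mathcal N$) and the weak lower semicontinuity bound $J(u)\le\liminf J(\tilde u_n)=c$, it forces $\int|\nabla\tilde u_n|^2\to\int|\nabla u|^2$ and $v_n\to0$ strongly. Consequently $A_0=\int|\nabla u|^2$, $u$ solves (\ref{aa}), $u\in\mathcal N$ and $J(u)=c$, so $u$ is a ground state solution.
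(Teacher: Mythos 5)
Your overall architecture matches the paper's: a bounded Palais--Smale (or Nehari-minimizing) sequence, a non-vanishing alternative, translation by the period $\tau$ to recapture a nonzero weak limit $u$, and the observation that the Kirchhoff coefficient only passes to the limit as some $A_0\geq\int_{\mathbb{Z}^3}|\nabla u|^2\,d\mu$, so that a priori $u$ solves the equation with $a+bA_0$. Up to that point your steps (fibering map, the identity $J=J-\tfrac{1}{2p}(J',\cdot)$ for coercivity, the lower bound $\|u\|\geq\delta$ on $\mathcal N$ via the discrete HLS inequality, Ekeland versus mountain pass) are all sound and essentially equivalent to the paper's Lemmas 3.1--3.2; the paper uses the $\ell^\infty$ form of the Lions lemma, which is the natural one on $\mathbb{Z}^3$, but your $L^2$-concentration version works equally well.

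The gap is in the final step. Your proposed resolution --- Brezis--Lieb splitting of the energy and of the Nehari identity for $v_n=\tilde u_n-u$, concluding that $v_n\to 0$ strongly --- does not go through as stated, for two reasons. First, the quartic Kirchhoff term does not split additively: $\bigl(\int|\nabla\tilde u_n|^2\bigr)^2=\bigl(\int|\nabla u|^2\bigr)^2+\bigl(\int|\nabla v_n|^2\bigr)^2+2\int|\nabla u|^2\int|\nabla v_n|^2+o(1)$, and the nonnegative cross term survives, so $J(\tilde u_n)\neq J(u)+J(v_n)+o(1)$ and $v_n$ is not a Palais--Smale sequence for the limiting problem. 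Second, your use of ``$J(u)\geq c$ once $u$ is shown to lie on $\mathcal N$'' is circular: if $A_0>\int|\nabla u|^2$ then the limit equation gives $(J'(u),u)=b\|\nabla u\|_2^2\bigl(\|\nabla u\|_2^2-A_0\bigr)<0$, i.e.\ $u\notin\mathcal N$, which is precisely the scenario you must exclude. The paper closes this by a scaling-back argument: assuming $\|\nabla u\|_2^2<A_0$, one has $(J'(u),u)<0$ while $(J'(su),su)>0$ for small $s$, hence some $s_0\in(0,1)$ with $s_0u\in\mathcal N$; then
\[
c\leq J(s_0u)-\tfrac14(J'(s_0u),s_0u)=\tfrac{s_0^2}{4}\|u\|^2+\bigl(\tfrac14-\tfrac1{2p}\bigr)s_0^{2p}\!\int_{\mathbb{Z}^3}(R_\alpha\ast|u|^p)|u|^p\,d\mu
\]
is \emph{strictly} less than the same expression at $s=1$, which by Fatou is at most $\liminf_k\bigl[J(\tilde u_k)-\tfrac14(J'(\tilde u_k),\tilde u_k)\bigr]=c$, a contradiction. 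This forces $A_0=\int|\nabla u|^2$ (note: only convergence of the gradient norms, not strong convergence of $v_n$ in $H$, is needed or obtained), after which $J'(u)=0$ and a second application of Fatou to $J-\tfrac14(J',\cdot)$ gives $J(u)=c$. You should replace your splitting argument with this step.
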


\begin{thm}\label{t0}
Let $p>4$. Assume that $(h_1)$ and $(h_3)$ hold. Then the equation (\ref{aa}) has a ground state sign-changing solution.
\end{thm}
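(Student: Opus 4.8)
The plan is to obtain the ground state sign-changing solution as a minimizer of $J$ over the sign-changing Nehari manifold $\mathcal{M}$, and then to upgrade that minimizer to a genuine critical point of $J$. The decisive structural feature, in contrast with the continuous setting, is that on $\mathbb{Z}^3$ the cross gradient term $D(u):=\int_{\mathbb{Z}^3}\nabla u^+\nabla u^-\,d\mu$ does \emph{not} vanish; in fact an edge-by-edge inspection (the only contributions come from edges joining a vertex where $u>0$ to one where $u<0$) shows $D(u)\ge 0$. Consequently $J(u)\neq J(u^+)+J(u^-)$, and both the Kirchhoff term and the Choquard term couple $u^+$ and $u^-$. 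Expanding $w=su^++tu^-$, the two-parameter fibering map $\psi(s,t):=J(su^++tu^-)$ consists of positive quadratic terms, positive quartic terms arising from the squared Kirchhoff term (including the mixed monomials $s^3t,\ s^2t^2,\ st^3$ produced by $D(u)$), and the negative Choquard terms of homogeneity $2p$, namely $s^{2p}\!\int(R_\alpha\ast|u^+|^p)|u^+|^p$, its $t$-analogue, and the cross term $s^pt^p\!\int(R_\alpha\ast|u^+|^p)|u^-|^p\,d\mu$.

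The technical heart is the fibering lemma: for every $u\in H$ with $u^\pm\neq 0$ there is a unique pair $(s_u,t_u)\in(0,\infty)^2$ with $s_uu^++t_uu^-\in\mathcal{M}$, and this pair is the unique global maximizer of $\psi$ on $[0,\infty)^2$. Since for $s,t>0$ the defining relations $(J'(w),w^+)=(J'(w),w^-)=0$ are precisely $\partial_s\psi=\partial_t\psi=0$, this is a statement about interior critical points of $\psi$. Anti-coercivity of $\psi$ (hence existence of a maximizer) holds because the degree-$2p$ Choquard terms dominate as $|(s,t)|\to\infty$; uniqueness of the interior critical point is obtained by the usual monotonicity analysis of the two scalar equations, after dividing by $s$ and $t$ respectively. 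This is exactly where $p>4$ enters: it places the Choquard homogeneity strictly above the quartic Kirchhoff homogeneity, so that the nonlinear response in each variable eventually overtakes the coupled Kirchhoff contributions and the monotonicity needed for uniqueness survives the presence of $D(u)$. In particular the lemma yields $\mathcal{M}\neq\emptyset$. I expect this coupled, non-separating fibering analysis to be the main obstacle.

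For attainment, I would take a minimizing sequence $\{u_n\}\subset\mathcal{M}$ for $m=\inf_{\mathcal M}J$. On $\mathcal{M}$ one has $J(u_n)=J(u_n)-\tfrac{1}{2p}(J'(u_n),u_n)=\big(\tfrac12-\tfrac{1}{2p}\big)\|u_n\|^2+b\big(\tfrac14-\tfrac{1}{2p}\big)\big(\int_{\mathbb{Z}^3}|\nabla u_n|^2\,d\mu\big)^2$, and since $p>4>2$ both coefficients are positive, giving $m\ge 0$ and boundedness of $\{u_n\}$ in $H$. By the discrete Hardy--Littlewood--Sobolev inequality together with (\ref{ac}), each $u_n^\pm$ satisfies a uniform lower bound $\|u_n^\pm\|\ge\rho>0$. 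Passing to a subsequence, $u_n\rightharpoonup u$ in $H$; the coercivity hypothesis $(h_3)$ yields the compact embedding $H\hookrightarrow\ell^q(\mathbb{Z}^3)$ for $q\ge 2$, so $u_n^\pm\to u^\pm$ in every $\ell^q$ and all Choquard integrals converge. The lower bounds then force $u^\pm\neq 0$. Projecting via the fibering lemma, set $\bar u=s_uu^++t_uu^-\in\mathcal{M}$, so $m\le J(\bar u)$; by weak lower semicontinuity of the quadratic and quartic parts and convergence of the Choquard part, $J(\bar u)\le\liminf_n J(s_uu_n^++t_uu_n^-)$, and because $u_n\in\mathcal{M}$ makes $(1,1)$ the maximizer of $\psi_{u_n}$ we get $J(s_uu_n^++t_uu_n^-)\le J(u_n)$. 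The chain collapses to equalities, producing a minimizer $\bar u\in\mathcal{M}$ with $J(\bar u)=m$ (and, by uniqueness of the maximizer, $s_u=t_u=1$, i.e.\ strong convergence).

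It remains to show the minimizer solves (\ref{aa}). I would argue by contradiction with a quantitative deformation lemma in the spirit of Bartsch--Weth: if $J'(\bar u)\neq 0$ then $J'$ is bounded away from $0$ on a neighborhood, and one deforms the two-parameter family $(s,t)\mapsto s\bar u^++t\bar u^-$ near the strict maximum point $(1,1)$ so as to push $\max_{s,t}J$ strictly below $m$, while a Brouwer/Miranda degree argument guarantees that the deformed family still meets $\mathcal{M}$; this contradicts $m=\inf_{\mathcal{M}}J$. Hence $J'(\bar u)=0$, so $\bar u$ is a weak solution with $\bar u^\pm\neq 0$, i.e.\ sign-changing. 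Finally, since every sign-changing solution lies in $\mathcal{M}$ and $\bar u$ realizes the infimum $m$ there, $\bar u$ has least energy among all sign-changing solutions, which gives Theorem~\ref{t0}.
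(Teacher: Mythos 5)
Your proposal follows essentially the same route as the paper: the coupled two-parameter fibering lemma on $\mathcal{M}$ (with the non-vanishing cross gradient term, which the paper records as $K_V(u)\le 0$, i.e.\ your $D(u)=-\tfrac12K_V(u)\ge 0$), attainment of $m$ via the $J-\tfrac{1}{2p}(J',\cdot)$ identity, the compact embedding from $(h_3)$, and the uniform $\ell^{6p/(3+\alpha)}$ lower bound on $u_n^{\pm}$, followed by the quantitative deformation plus Brouwer degree argument. The only minor imprecision is in the uniqueness step: to make the monotonicity comparison work against the coupled quartic Kirchhoff terms one should divide the equation $\partial_s\psi=0$ by $s^3$ (not by $s$), which is exactly where $p>4$ is used, as you correctly identify.
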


The rest of this paper is organized as follows. In Section 2, we present some basic results on graphs. In Section 3, we prove the existence of ground state solutions based on the Nehari manifold and Lions lemma (Theorem \ref{t2}). In Section 4, we prove the existence of ground state sign-changing solutions by adopting constrained minimization arguments on the sign-changing Nehari manifold (Theorem \ref{t0}).

\section{Preliminaries} 
In this section, we introduce the basic settings on graphs and give some preliminary results that useful in our proofs.

Let $G=(V,E)$ be a connected, locally finite graph, where $V$ denotes the vertex set and $E$ denotes the edge set. We call vertices $x$ and $y$ neighbors, denoted by $x \sim y$, if there exists an edge connecting them, i.e. $(x, y) \in E$. For any $x,y\in V$, the distance $d(x,y)$ is defined as the minimum number of edges connecting $x$ and $y$, namely
$$d(x,y)=\inf\{k:x=x_0\sim\cdots\sim x_k=y\}.$$
Let $B_{r}(a)=\{x\in V: d(x,a)\leq r\}$ be the closed ball of radius $r$ centered at $a\in \mathbb{V}$. For brevity, we write $B_{r}:=B_{r}(0)$.

In this paper, we consider, the natural discrete model of the Euclidean space, the integer lattice graph.  The $3$-dimensional integer lattice graph, denoted by $\mathbb{Z}^3$, consists of the set of vertices $V=\mathbb{Z}^3$ and the set of edges $E=\{(x,y): x,\,y\in\mathbb{Z}^{3},\,\underset {{i=1}}{\overset{3}{\sum}}|x_{i}-y_{i}|=1\}.$
In the sequel, we denote $|x-y|:=d(x,y)$ on the lattice graph $\mathbb{Z}^{3}$.

For $u,v \in C(\mathbb{Z}^{3})$, we define the Laplacian of $u$ as
$$
\Delta u(x)=\sum\limits_{y \sim x}(u(y)-u(x)),
$$
 and the gradient form $\Gamma$ as
$$
\Gamma(u, v)(x)=\frac{1}{2} \sum\limits_{y \sim x}(u(y)-u(x))(v(y)-v(x)) .
$$
We write $\Gamma(u)=\Gamma(u, u)$ and denote the length of the gradient as
$$
|\nabla u|(x)=\sqrt{\Gamma(u)(x)}=\left(\frac{1}{2} \sum\limits_{y \sim x}(u(y)-u(x))^{2}\right)^{\frac{1}{2}}.
$$

The space $\ell^{p}(\mathbb{Z}^{3})$ is defined as
$
\ell^{p}(\mathbb{Z}^{3})=\left\{u \in C(\mathbb{Z}^{3}):\|u\|_{p}<\infty\right\},
$ where

$$
\|u\|_{p}= \begin{cases}\left(\sum\limits_{x \in \mathbb{Z}^{3}}|u(x)|^{p}\right)^{\frac{1}{p}}, &  1 \leq p<\infty, \\ \sup\limits_{x \in \mathbb{Z}^{3}}|u(x)|, & p=\infty.\end{cases}
$$

Moreover, we  define by $D^{1,2}(\mathbb{Z}^3)$ the completion of $C_c(\mathbb{Z}^3)$ under the norm $\|u\|_{D^{1,2}}=\int_{\mathbb{Z}^3}|\nabla u|^2\,d\mu.$
Since $|\nabla u(x)|^2=\frac{1}{2}\underset {y\sim x}{\sum}(u(y)-u(x))^{2}$, one gets easily that
\begin{equation}\label{0.2}
  \int_{\mathbb{Z}^3}|\nabla u|^2\,d\mu\leq C\| u \|^2_2.  
\end{equation}

Recall that, for a given functional $\Psi\in C^{1}(X,\mathbb{R})$, a sequence $\{u_k\}\subset X$ is a $(PS)_c$ sequence of the functional $\Psi$, if it satisfies, as $k\rightarrow\infty$,
\begin{eqnarray*}
\Psi(u_k)\rightarrow c, \qquad \text{in}~ X,\qquad\text{and}\qquad
\Psi'(u_k)\rightarrow 0, \qquad \text{in}~ X^{*}.
\end{eqnarray*}
where $X$ is a Banach space and $X^{*}$ is the dual space of $X$. 

The following discrete Hardy-Littlewood-Sobolev (HLS for abbreviation) inequality plays a key role in this paper, see \cite{LW,W1}.

\begin{lm}\label{lm1}
Let $0<\alpha <3,\,1<r,s<\infty$ and $\frac{1}{r}+\frac{1}{s}+\frac{3-\alpha}{3}=2$. We have the discrete
HLS inequality
\begin{equation}\label{bo}
\int_{V}(R_\alpha\ast u)(x)v(x)\,d\mu\leq C_{r,s,\alpha}\|u\|_r\|v\|_s,\quad u\in \ell^r(V),\,v\in \ell^s(V).
\end{equation}
And an equivalent form is
\begin{equation*}\label{p1}
\|R_\alpha\ast u\|_{\frac{3r}{3-\alpha r}}\leq C_{r,\alpha}\|u\|_r,\quad u\in \ell^r(V),
\end{equation*}
where $1<r<\frac{3}{\alpha}$.
\end{lm}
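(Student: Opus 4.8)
The plan is to prove the bilinear inequality (\ref{bo}) first and then read off the operator form by duality, reducing everything to convolution against a translation-invariant Riesz-type kernel. Throughout I would assume $u,v\geq 0$, since replacing $u,v$ by $|u|,|v|$ only enlarges the modulus of the left-hand side. The first step is a clean pointwise bound on the kernel: the two properties of $R_\alpha$ quoted from \cite{MC}---that it is non-singular on the diagonal and that $R_\alpha(x,y)\sim|x-y|^{\alpha-3}$ for $|x-y|\gg1$---yield a constant $C>0$ with
\begin{equation*}
0\leq R_\alpha(x,y)\leq C\,(1+|x-y|)^{\alpha-3},\qquad x,y\in\mathbb{Z}^3 .
\end{equation*}
It therefore suffices to establish (\ref{bo}) with $R_\alpha$ replaced by convolution against the kernel $k(z)=(1+|z|)^{\alpha-3}$ on $\mathbb{Z}^3$.

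Second, I would show $k\in\ell^{q,\infty}(\mathbb{Z}^3)$ with $q=\frac{3}{3-\alpha}$, which is a lattice-point count: since $3-\alpha>0$, the super-level set $\{z:k(z)>t\}$ sits inside a ball of radius $\approx t^{-1/(3-\alpha)}$, containing $\approx t^{-3/(3-\alpha)}=t^{-q}$ lattice points, so $\sup_{t>0}t^{\,q}\,\#\{k>t\}<\infty$. With the exponent bookkeeping $\frac1q=\frac{3-\alpha}{3}$ and $\frac1p=\frac1r+\frac1q-1=\frac1r-\frac{\alpha}{3}=\frac{3-\alpha r}{3r}$, Young's convolution inequality for a weak-type kernel gives
\begin{equation*}
\|k\ast u\|_{p}\leq C\,\|k\|_{q,\infty}\,\|u\|_{r},\qquad p=\frac{3r}{3-\alpha r},
\end{equation*}
which is precisely the asserted operator bound on $R_\alpha\ast u$.

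Third, the two formulations are equivalent by duality. The constraint $\frac1r+\frac1s+\frac{3-\alpha}{3}=2$ rearranges to $\frac1s=1-\frac1p$, i.e. $s=p'$, so H\"{o}lder's inequality turns the operator bound into
\begin{equation*}
\int_{V}(R_\alpha\ast u)\,v\,d\mu\leq\|R_\alpha\ast u\|_{p}\,\|v\|_{p'}\leq C\,\|u\|_r\,\|v\|_s ,
\end{equation*}
which is (\ref{bo}); conversely, testing (\ref{bo}) against $v=\operatorname{sgn}(k\ast u)\,|k\ast u|^{p-1}$ recovers the operator bound, so the two are proved together.

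The hard part will be the rigorous use of Young's inequality for a weak-type kernel on the lattice, which rests on Marcinkiewicz interpolation and requires staying away from the forbidden endpoints. The hypothesis $1<r<\frac{3}{\alpha}$ is exactly what keeps $1<p<\infty$ (and $r>1$ keeps us off the $\ell^1$ endpoint), so the interpolation applies. If one prefers to sidestep this machinery, an equivalent route is a direct transfer to $\mathbb{R}^3$: associate to $u$ the function constant on the unit cube centered at each lattice point, whose $L^r(\mathbb{R}^3)$ norm equals $\|u\|_r$; compare the discrete double sum with the continuous double integral against $|\xi-\eta|^{\alpha-3}$---the off-diagonal cube pairs are comparable since $|\xi-\eta|$ and $|x-y|$ differ by at most $\sqrt3$, while the finitely many near-diagonal pairs are absorbed by the convergent local integral of the non-singular kernel---and then invoke the classical Hardy-Littlewood-Sobolev inequality on $\mathbb{R}^3$ with exponent $\lambda=3-\alpha$. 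Either route delivers (\ref{bo}).
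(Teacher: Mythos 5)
The paper offers no proof of this lemma at all: it is quoted from the references (``see \cite{LW,W1}''), so there is no in-paper argument to compare yours against. Your proposal is a correct, self-contained proof along the standard lines, and every step checks out, with a few points worth making explicit. First, the pointwise bound $|R_\alpha(x,y)|\leq C(1+|x-y|)^{\alpha-3}$ needs, besides the far-field asymptotics from \cite{MC}, the uniform boundedness of $R_\alpha$ near the diagonal; this follows from the displayed integral formula because $\mu(k)\asymp |k|^2$ near $k=0$, so $\mu^{-\alpha/2}$ is integrable on $\mathbb{T}^3$ precisely when $\alpha<3$. (Also note the paper's $|x-y|$ is the graph, i.e.\ $\ell^1$, distance, comparable to the Euclidean one, so your lattice-point count is unaffected.) Second, you are right that the weak-type machinery is genuinely needed and not an overcaution: $k(z)=(1+|z|)^{\alpha-3}$ fails to lie in $\ell^{3/(3-\alpha)}(\mathbb{Z}^3)$ by a logarithmic divergence, and since on a discrete space the $\ell^p$ scale nests the wrong way ($\ell^p\subset\ell^{p_1}$ for $p\leq p_1$), ordinary Young's inequality at a nearby exponent cannot be bootstrapped to the sharp $p=\frac{3r}{3-\alpha r}$; Marcinkiewicz interpolation (weak Young) or your transfer argument is required, and the hypothesis $1<r<\frac{3}{\alpha}$ keeps $1<p<\infty$ exactly as you say. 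Third, in the transfer route the near-diagonal pairs are not absorbed by continuous local integrability per se; what makes them harmless is the discrete H\"{o}lder step, which works because the scaling relation gives $\frac{1}{r}+\frac{1}{s}=1+\frac{\alpha}{3}>1$, hence $s<r'$ and $\|v\|_{r'}\leq\|v\|_{s}$, so the finitely many neighbor pairs per site contribute at most $C\|u\|_r\|v\|_s$. Finally, in the duality direction, testing (\ref{bo}) against $v=\operatorname{sgn}(k\ast u)|k\ast u|^{p-1}$ presupposes $\|k\ast u\|_p<\infty$; this is repaired in the usual way by testing against truncations to finite subsets of $V$ and passing to the limit by monotone convergence. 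With these glosses your argument is complete and, unlike the paper, actually proves the lemma rather than citing it.
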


\begin{rem}\label{rm}

For $u\in H$ and $p\ge\frac{3+\alpha}{3}$, let $r=s=\frac{6}{3+\alpha}$ in Lemma \ref{lm1} and by (\ref{ac}), we obtain that
\begin{equation}\label{f1}
\int_{V}(R_{\alpha}\ast|u|^p)|u|^p\, \,d\mu\leq C \|u\|_{\frac{6p}{3+\alpha}}^{2p}\leq C\|u\|^{2p}.
\end{equation}

\end{rem}

We introduce the classical Br\'{e}zis-Lieb lemma \cite{BL}. Let $(\Omega, \Sigma, \tau)$ be a measure space, which consists of a set $\Omega$ equipped with a $\sigma$-algebra $\Sigma$ and a Borel measure $\tau:\Sigma\rightarrow[0,\infty]$. 

\begin{lm}\label{i}
Let $(\Omega, \Sigma, \tau)$ be a measure space and $\{u_k\}\subset L^{p}(\Omega, \Sigma, \tau)$ with $0<p<\infty$. If
$\{u_k\}$ is uniformly bounded in $L^{p}(\Omega)$ and $u_k\rightarrow u, \tau$-almost everywhere in $\Omega$,
then we have that
\begin{eqnarray*}
\underset{k\rightarrow\infty}{\lim}(\|u_k\|^{p}_{L^{p}(\Omega)}-\|u_k-u\|^{p}_{L^{p}(\Omega)})=\|u\|^{p}_{L^{p}(\Omega)}.
\end{eqnarray*}

\end{lm}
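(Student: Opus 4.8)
The plan is to reduce the statement to showing that the integrand
$$f_k := \Big| \,|u_k|^p - |u_k - u|^p - |u|^p \,\Big|$$
tends to zero in $L^1(\Omega, \Sigma, \tau)$; once $\int_\Omega f_k \, d\tau \to 0$, the claimed limit follows at once from the triangle inequality
$$\Big| \,\|u_k\|^p_{L^p(\Omega)} - \|u_k - u\|^p_{L^p(\Omega)} - \|u\|^p_{L^p(\Omega)} \,\Big| \le \int_\Omega f_k \, d\tau.$$
Writing $v_k := u_k - u$, the hypothesis $u_k \to u$ $\tau$-a.e.\ gives $v_k \to 0$ $\tau$-a.e., hence $f_k \to 0$ $\tau$-a.e. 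The genuine difficulty is that $f_k$ is \emph{not} dominated by any fixed $L^1$ function, so a direct appeal to the dominated convergence theorem is unavailable.

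The key tool is the elementary inequality: for every $\varepsilon > 0$ there is a constant $C_\varepsilon > 0$ such that for all $a, b \in \mathbb{R}$,
$$\Big| \,|a+b|^p - |a|^p \,\Big| \le \varepsilon |a|^p + C_\varepsilon |b|^p.$$
For $0 < p \le 1$ this follows from the subadditivity $|a+b|^p \le |a|^p + |b|^p$ with $C_\varepsilon = 1$; for $p > 1$ one combines the pointwise bound $\big| |a+b|^p - |a|^p\big| \le C_p(|a|^{p-1}|b| + |b|^p)$ (mean value theorem together with $(|a|+|b|)^{p-1}\le C_p(|a|^{p-1}+|b|^{p-1})$) with Young's inequality $|a|^{p-1}|b| \le \delta |a|^p + C_\delta |b|^p$, then chooses $\delta$ small. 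Applying the inequality with $a = v_k$ and $b = u$, and adding $|u|^p$, yields the crucial bound
$$f_k \le \varepsilon |v_k|^p + (C_\varepsilon + 1)\,|u|^p.$$

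With this in hand I would introduce the truncated functions $g_k^\varepsilon := \big( f_k - \varepsilon |v_k|^p \big)^+$. By the bound above, $0 \le g_k^\varepsilon \le (C_\varepsilon + 1)|u|^p$, and the dominating function lies in $L^1(\Omega)$ because $u \in L^p(\Omega)$ — which itself follows from Fatou's lemma applied to the uniformly $L^p$-bounded sequence $\{u_k\}$ converging a.e.\ to $u$. Since $g_k^\varepsilon \to 0$ $\tau$-a.e., dominated convergence now gives $\int_\Omega g_k^\varepsilon \, d\tau \to 0$. As $f_k \le g_k^\varepsilon + \varepsilon |v_k|^p$ and $M := \sup_k \|v_k\|^p_{L^p(\Omega)} < \infty$ (the sequence $\{u_k\}$ is uniformly bounded and $u\in L^p$, so $\{v_k\}$ is bounded in $L^p$), we obtain
$$\limsup_{k \to \infty} \int_\Omega f_k \, d\tau \le \varepsilon M.$$
Letting $\varepsilon \to 0$ forces $\int_\Omega f_k \, d\tau \to 0$, completing the argument. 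The main obstacle, and the only nontrivial point, is exactly the failure of a uniform $L^1$ dominating function for $f_k$; the device of splitting off the $\varepsilon |v_k|^p$ term and applying dominated convergence only to the dominated remainder $g_k^\varepsilon$ is what circumvents it, while the algebraic inequality and the $L^p$-boundedness of $\{v_k\}$ are routine.
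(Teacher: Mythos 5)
Your proof is correct, and it is precisely the classical Br\'ezis--Lieb argument: the elementary inequality $\bigl||a+b|^p-|a|^p\bigr|\le \varepsilon|a|^p+C_\varepsilon|b|^p$, the truncation $g_k^\varepsilon=(f_k-\varepsilon|v_k|^p)^+$ dominated by $(C_\varepsilon+1)|u|^p$, dominated convergence, and the $\varepsilon\to 0$ limit using the $L^p$-boundedness of $\{v_k\}$. The paper itself gives no proof of this lemma --- it states it as the classical Br\'ezis--Lieb lemma with a citation to the original 1983 paper --- and your argument is exactly the proof in that cited source, so there is nothing to compare beyond noting full agreement.
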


\begin{rem}\label{lm4}
If $\Omega$ is countable and $\tau$ is the counting measure $\mu$ in $\Omega$, then we get a discrete version of the Br\'{e}zis-Lieb lemma.

\end{rem}

We also have the following two discrete Br\'{e}zis-Lieb lemmas, see \cite{HLW,LW}.
\begin{lm}\label{ln}
Let $\{u_k\}$ be a bounded sequence in $D^{1,2}(V)$ and $u_k\rightarrow u$ pointwise in $V$. Then we have that
\begin{eqnarray*}
\underset{k\rightarrow\infty}{\lim}\left(\int_{V}|\nabla u_k|^2\,d\mu
-\int_{V}|\nabla (u_k-u)|^{2}\,d\mu\right)=\int_{V}|\nabla u|^{2}\,d\mu.
\end{eqnarray*}
\end{lm}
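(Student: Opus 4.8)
The plan is to recognize $\int_V |\nabla u|^2\,d\mu$ as a squared $L^2$-norm on a suitable measure space of edges, so that Lemma \ref{i} (the classical Br\'ezis-Lieb lemma applied with $p=2$) can be invoked directly. Concretely, I would let $\Omega$ be the set of ordered pairs $(x,y)\in\mathbb{Z}^3\times\mathbb{Z}^3$ with $x\sim y$, equip it with the measure $\tau$ assigning mass $\tfrac12$ to each such pair, and introduce the difference operator $(Du)(x,y)=u(y)-u(x)$. Then for any $w\in C(\mathbb{Z}^3)$,
$$\int_V |\nabla w|^2\,d\mu=\sum_{x\in\mathbb{Z}^3}\frac12\sum_{y\sim x}(w(y)-w(x))^2=\int_\Omega |Dw|^2\,d\tau=\|Dw\|_{L^2(\Omega)}^2,$$
since summing first over $x$ and then over $y\sim x$ is the same as summing over all ordered pairs $(x,y)$ with $x\sim y$.

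Next I would verify the two hypotheses of Lemma \ref{i} for the sequence $\{Du_k\}\subset L^2(\Omega)$. For boundedness, since $\{u_k\}$ is bounded in $D^{1,2}(V)$, the identity above gives $\|Du_k\|_{L^2(\Omega)}^2=\int_V|\nabla u_k|^2\,d\mu$ uniformly bounded, so $\{Du_k\}$ is uniformly bounded in $L^2(\Omega)$. For pointwise convergence, because $u_k\to u$ pointwise on $V$, for every fixed pair $(x,y)$ we have $(Du_k)(x,y)=u_k(y)-u_k(x)\to u(y)-u(x)=(Du)(x,y)$, i.e. $Du_k\to Du$ holds $\tau$-almost everywhere on $\Omega$. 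By Fatou's lemma this also forces $Du\in L^2(\Omega)$, hence $\int_V|\nabla u|^2\,d\mu<\infty$, so no separate integrability assumption on $u$ is needed.

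Finally I would apply Lemma \ref{i} with $p=2$ on $(\Omega,\tau)$ to obtain
$$\lim_{k\to\infty}\left(\|Du_k\|_{L^2(\Omega)}^2-\|Du_k-Du\|_{L^2(\Omega)}^2\right)=\|Du\|_{L^2(\Omega)}^2,$$
and then use the linearity $Du_k-Du=D(u_k-u)$ together with the identity from the first paragraph to translate each of the three terms back into $\int_V|\nabla u_k|^2\,d\mu$, $\int_V|\nabla(u_k-u)|^2\,d\mu$, and $\int_V|\nabla u|^2\,d\mu$, which is exactly the claimed limit. There is essentially no analytic obstacle here: the entire content is the passage from the gradient to an edge-wise $\ell^2$ quantity, after which the statement is an immediate special case of the already-established Br\'ezis-Lieb lemma. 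The only point requiring a little care is the bookkeeping of the factor $\tfrac12$ (equivalently, the decision to use ordered pairs weighted by $\tfrac12$) so that the edge norm matches $\int_V|\nabla\,\cdot\,|^2\,d\mu$ precisely.
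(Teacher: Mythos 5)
Your proof is correct. Note that the paper does not actually prove Lemma \ref{ln}; it is stated with a citation to \cite{HLW,LW}, so there is no in-text argument to compare against. Your reduction --- rewriting $\int_V|\nabla w|^2\,d\mu$ as $\|Dw\|_{L^2(\Omega)}^2$ on the measure space of ordered adjacent pairs weighted by $\tfrac12$, checking the two hypotheses of Lemma \ref{i} (uniform $L^2$-boundedness from boundedness in $D^{1,2}(V)$, and $\tau$-a.e.\ convergence of $Du_k$ to $Du$ from pointwise convergence of $u_k$), and then using linearity $Du_k-Du=D(u_k-u)$ to translate back --- is the standard and natural way to establish this gradient Br\'ezis--Lieb identity on graphs, and all the steps check out, including the Fatou argument showing $u\in D^{1,2}(V)$. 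A minor remark: since the exponent is $2$, one could alternatively avoid Lemma \ref{i} entirely by expanding $\|Du_k\|_{L^2}^2-\|Du_k-Du\|_{L^2}^2=2\langle Du_k,Du\rangle_{L^2}-\|Du\|_{L^2}^2$ and passing to the limit using the weak $L^2$-convergence of $Du_k$ (which follows from boundedness plus pointwise convergence and density of finitely supported functions); but your route is equally valid and fits the framework the paper sets up.
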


\begin{lm}\label{03}
Let $\{u_k\}$ be a bounded sequence in $\ell^{\frac{6p}{3+\alpha}}(V)$ with $p\geq 1$ and $u_k\rightarrow u$ pointwise in $V$. Then we have that
\begin{eqnarray*}
\underset{k\rightarrow\infty}{\lim}\left(\int_{V}(R_\alpha\ast|u_k|^{p})|u_k|^{p}\,d\mu
-\int_{V}(R_\alpha\ast|u_k-u|^{p})|u_k-u|^{p}\,d\mu\right)=\int_{V}(R_\alpha\ast|u|^{p})|u|^{p}\,d\mu.
\end{eqnarray*}
\end{lm}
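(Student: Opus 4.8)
The plan is to exploit the bilinearity and symmetry of the nonlocal pairing. Write $r=\frac{6}{3+\alpha}$, so that $\frac{6p}{3+\alpha}=pr$ and, since $\alpha\in(0,3)$, $r\in(1,2)$ with conjugate exponent $r'=\frac{6}{3-\alpha}$. For $f,g\in\ell^{r}(V)$ set $B(f,g)=\int_V(R_\alpha\ast f)g\,d\mu$; this form is symmetric (because $R_\alpha(x,y)=R_\alpha(y,x)$) and, by the discrete HLS inequality (Lemma \ref{lm1}) with $r=s=\frac{6}{3+\alpha}$, satisfies $|B(f,g)|\le C\|f\|_r\|g\|_r$. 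Put $f_k=|u_k|^p$, $g_k=|u_k-u|^p$ and $f=|u|^p$; since $\{u_k\}$ is bounded in $\ell^{pr}(V)$, the sequences $\{f_k\}$ and $\{g_k\}$ are bounded in $\ell^r(V)$, and $f\in\ell^r(V)$. Writing $w_k=f_k-g_k$ and expanding the quadratic form,
\begin{equation*}
B(f_k,f_k)-B(g_k,g_k)=2B(g_k,w_k)+B(w_k,w_k),
\end{equation*}
so the lemma reduces to showing $B(w_k,w_k)\to B(f,f)$ and $B(g_k,w_k)\to0$.

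The analytic heart of the proof is the strong convergence $w_k\to f$ in $\ell^r(V)$, a refined (discrete) Br\'ezis--Lieb statement that I would establish by hand rather than by quoting Lemma \ref{i}. Set $a_k=|u_k|^p-|u_k-u|^p-|u|^p$. Since $u_k\to u$ pointwise, $a_k(x)\to0$ for every $x$. Using the elementary inequality valid for $p\ge1$, namely that for each $\varepsilon>0$ there is $C_\varepsilon>0$ with $\big||s+t|^p-|t|^p\big|\le\varepsilon|t|^p+C_\varepsilon|s|^p$, applied with $t=u_k-u$ and $s=u$, gives $|a_k|\le\varepsilon|u_k-u|^p+(C_\varepsilon+1)|u|^p$. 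Hence the truncation $W_{k,\varepsilon}=\big(|a_k|-\varepsilon|u_k-u|^p\big)^{+}$ satisfies $0\le W_{k,\varepsilon}\le(C_\varepsilon+1)|u|^p$, so $W_{k,\varepsilon}^{\,r}\le(C_\varepsilon+1)^r|u|^{pr}\in\ell^1(V)$ and $W_{k,\varepsilon}^{\,r}\to0$ pointwise; by discrete dominated convergence $\sum_x W_{k,\varepsilon}^{\,r}\to0$. Since $|a_k|\le W_{k,\varepsilon}+\varepsilon|u_k-u|^p$ and $r>1$,
\begin{equation*}
\|a_k\|_r^r\le 2^{r-1}\Big(\sum_x W_{k,\varepsilon}^{\,r}+\varepsilon^r\|u_k-u\|_{pr}^{pr}\Big),
\end{equation*}
and as $\{\|u_k-u\|_{pr}\}$ is bounded, letting $k\to\infty$ and then $\varepsilon\to0$ yields $\|a_k\|_r\to0$, i.e. $w_k\to f$ in $\ell^r(V)$.

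Given this, the two remaining limits follow from the continuity of $B$. For the quadratic term, $|B(w_k,w_k)-B(f,f)|\le C\|w_k-f\|_r(\|w_k\|_r+\|f\|_r)\to0$. For the cross term, write $B(g_k,w_k)=B(g_k,f)+B(g_k,w_k-f)$; the second piece is bounded by $C\|g_k\|_r\|w_k-f\|_r\to0$, while $B(g_k,f)=\sum_x g_k(x)\,(R_\alpha\ast f)(x)$ with $R_\alpha\ast f\in\ell^{r'}(V)$ by the equivalent form of Lemma \ref{lm1} (one checks $\frac{3r}{3-\alpha r}=r'$). Since $g_k=|u_k-u|^p\to0$ pointwise and is bounded in $\ell^r(V)$, a splitting of $V$ into a large finite set and its complement (the finite part vanishing by pointwise convergence, the tail controlled by H\"older and the $\ell^{r'}$-summability of $R_\alpha\ast f$) gives $B(g_k,f)\to0$. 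Combining, $B(f_k,f_k)-B(g_k,g_k)\to B(f,f)=\int_V(R_\alpha\ast|u|^p)|u|^p\,d\mu$, which is the claim.

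I expect the main obstacle to be the refined Br\'ezis--Lieb step of the second paragraph: the result stated (Lemma \ref{i}) only yields the norm identity $\||u_k|^p\|_r^r-\||u_k-u|^p\|_r^r\to\||u|^p\|_r^r$, whereas the bilinear expansion forces me to upgrade this to genuine $\ell^r$-convergence of $w_k=|u_k|^p-|u_k-u|^p$ to $|u|^p$. The truncation-plus-dominated-convergence argument handles this, but it is the only place where a nontrivial uniform estimate (the $\varepsilon$-splitting inequality together with boundedness in $\ell^{pr}$) is genuinely required; the two nonlocal limits afterwards are then purely formal consequences of the HLS bound.
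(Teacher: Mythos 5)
Your proof is correct, but note that the paper contains no proof of Lemma \ref{03} to compare against: it is quoted from \cite{HLW,LW}. Your argument is essentially the standard one used in those references (and, in the continuum setting, going back to Ackermann and Moroz--Van Schaftingen): expand the symmetric bilinear form $B$ so that everything reduces to the strong $\ell^r$-convergence of $w_k=|u_k|^p-|u_k-u|^p$ to $|u|^p$ with $r=\frac{6}{3+\alpha}$, establish that convergence by the $\varepsilon$-splitting inequality plus truncation and dominated convergence, and finish with the HLS bound of Lemma \ref{lm1}. You are right to flag the refined Br\'ezis--Lieb step as the crux: the plain norm identity of Lemma \ref{i} gives only $\||u_k|^p\|_r^r-\||u_k-u|^p\|_r^r\to\||u|^p\|_r^r$, which is too weak for the bilinear expansion, and your truncation argument correctly upgrades it. The exponent bookkeeping also checks out: $r\in(1,2)$, $r<3/\alpha$ because $\alpha<3$, and $\frac{3r}{3-\alpha r}=\frac{6}{3-\alpha}=r'$, so $R_\alpha\ast|u|^p\in\ell^{r'}(V)$ and the finite-set/tail splitting legitimately yields $B(g_k,f)\to 0$. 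Two cosmetic remarks: you should record explicitly that $|u|^p\in\ell^r(V)$, which follows from Fatou's lemma and the boundedness of $\{u_k\}$ in $\ell^{pr}(V)$ (your domination by $(C_\varepsilon+1)|u|^p$ uses this); and the cross term could alternatively be dispatched by noting that $g_k\rightharpoonup 0$ weakly in $\ell^r(V)$ (bounded plus pointwise-to-zero, with $r>1$), though your explicit splitting proves exactly this fact and keeps the argument self-contained.
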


We state two compactness results for $H$. We first present a discrete Lions lemma, which
denies a sequence $\left\{u_{k}\right\}\subset H$ to distribute itself over $V.$

\begin{lm}\label{lgh}
 Let $2\leq s<\infty$. Assume that $\left\{u_{k}\right\}$ is bounded in $H$ and

$$
\left\|u_{k}\right\|_{\infty} \rightarrow 0,\quad n \rightarrow\infty \text {. }
$$
Then, for any $s<t<\infty$,
$$
u_{k} \rightarrow 0,\quad  \text { in } \ell^{t}\left(V\right) \text {. }
$$   
\end{lm}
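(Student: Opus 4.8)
The plan is to reduce the statement to a single interpolation inequality that is special to the discrete setting. The crucial observation is that on a graph the $\ell^\infty$ norm dominates every summand, so for any exponent $t>s$ one can peel off a power of the sup norm while retaining an $\ell^s$ sum. Concretely, I would fix $t$ with $s<t<\infty$ and write, for every vertex $x\in V$,
$$|u_k(x)|^t=|u_k(x)|^{\,t-s}\,|u_k(x)|^s\le \|u_k\|_\infty^{\,t-s}\,|u_k(x)|^s.$$
Summing over all $x\in V$ then converts this pointwise bound into the global estimate
$$\|u_k\|_t^t\le \|u_k\|_\infty^{\,t-s}\,\|u_k\|_s^s.$$

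Next I would control the $\ell^s$ factor uniformly in $k$ using the hypothesis that $\{u_k\}$ is bounded in $H$. Since $s\ge 2$, inequality (\ref{ac}) yields $\|u_k\|_s\le C\|u_k\|$, and boundedness in $H$ gives $\sup_k\|u_k\|=:M<\infty$, whence $\sup_k\|u_k\|_s^s\le (CM)^s<\infty$. Combining this with the previous display produces
$$\|u_k\|_t^t\le (CM)^s\,\|u_k\|_\infty^{\,t-s}.$$
Because $t-s>0$ while $\|u_k\|_\infty\to 0$ by assumption, the right-hand side tends to $0$ as $k\to\infty$, so $u_k\to 0$ in $\ell^t(V)$, which is the claim.

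I expect no serious obstacle here: unlike the continuous Lions lemma, where one must rule out mass escaping to infinity through a covering/translation argument, the discrete version follows purely from interpolation, since the summability built into $\ell^s$ together with a vanishing sup norm already forces decay in every higher $\ell^t$. The only points requiring care are bookkeeping ones, namely that the exponent split $t=(t-s)+s$ is admissible (guaranteed by $t>s$) and that the embedding (\ref{ac}) applies (guaranteed by $s\ge 2$); both are exactly the stated hypotheses.
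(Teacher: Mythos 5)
Your proof is correct and follows exactly the paper's argument: the same interpolation inequality $\|u_k\|_t^t\le\|u_k\|_s^s\|u_k\|_\infty^{t-s}$ combined with the embedding (\ref{ac}) to bound the $\ell^s$ norm via boundedness in $H$. You merely spell out the pointwise derivation of the interpolation inequality, which the paper leaves implicit.
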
 

\begin{proof}
 By (\ref{ac}), we get that $\{u_k\}$ is bounded in $\ell^{s}\left(V\right)$. Hence, for $s<t<\infty$, this result follows from an interpolation inequality
$$
\left\|u_{k}\right\|_{t}^{t} \leq\left\|u_{k}\right\|_{s}^{s}\left\|u_{k}\right\|_{\infty}^{t-s} .
$$
 
\end{proof} 

The following compactness result is well known, see \cite{ZZ}.
\begin{lm}\label{lgg}
Let $(h_1)$ and $(h_3)$ hold. Then for any $q\geq 2$, $H$ is continuously embedded into $\ell^{q}\left(V\right)$. That is, there exists a constant $C$ depending only on $q$ such that, for any $u \in H$,
$$
\|u\|_{q} \leq C\|u\|.
$$
Furthermore, for any bounded sequence $\left\{u_{k}\right\} \subset H$, there exists $u \in H$ such that, up to a subsequence, 
$$
\begin{cases}u_{k} \rightharpoonup u, & \text { in } H, \\ u_{k} \rightarrow u, & \text{pointwise~in}~ V, \\ u_{k} \rightarrow u, & \text { in } \ell^{q}\left(V\right) .\end{cases}
$$ 
\end{lm}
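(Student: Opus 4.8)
The plan is to treat the continuous embedding and the compactness separately, with the compactness resting entirely on the coercivity of $h$ provided by $(h_3)$. The continuous embedding requires no new work: the chain $\|u\|_q \le \|u\|_2 \le C\|u\|$ for $q \ge 2$ is already recorded in (\ref{ac}) and uses only $(h_1)$. Indeed $\|u\|_2^2 \le h_0^{-1}\int_{\mathbb{Z}^3} h\,u^2\,d\mu \le h_0^{-1}\|u\|^2$, while on a counting-measure space one has $\|u\|_q^q = \sum_x |u|^2|u|^{q-2} \le \|u\|_\infty^{q-2}\|u\|_2^2 \le \|u\|_2^q$ because $\|u\|_\infty \le \|u\|_2$. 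Hence $H \hookrightarrow \ell^q(V)$ continuously with constant depending only on $q$.

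For the compactness, given a bounded sequence $\{u_k\}\subset H$, I would first extract a weakly convergent subsequence $u_k \rightharpoonup u$ in $H$, which exists since $H$ is a Hilbert space and bounded sets are weakly sequentially precompact; the weak limit $u$ lies in $H$. To obtain pointwise convergence I note that, for each fixed $x_0\in V$, the evaluation map $u \mapsto u(x_0)$ is a bounded linear functional on $H$, because $|u(x_0)| \le \|u\|_\infty \le \|u\|_2 \le C\|u\|$. Weak convergence therefore forces $u_k(x_0)\to u(x_0)$ for every $x_0$, that is, $u_k\to u$ pointwise in $V$.

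The crux is the strong $\ell^q$ convergence, which I would first establish for $q=2$ by a tail estimate driven by $(h_3)$. Fix $\varepsilon>0$. Since $h(x)\to\infty$ as $|x-x_0|\to\infty$, there is $R>0$ with $h(x)\ge \varepsilon^{-1}$ for all $x$ outside the ball $B_R(x_0)$. Writing $M$ for a bound on $\{\|u_k\|\}$, the tail obeys
$$
\sum_{x\notin B_R(x_0)} |u_k(x)|^2 \le \varepsilon \sum_{x\notin B_R(x_0)} h(x)|u_k(x)|^2 \le \varepsilon\,\|u_k\|^2 \le \varepsilon M^2
$$
uniformly in $k$, and the identical bound holds for $u$ because $\|u\|\le\liminf_k\|u_k\|\le M$ by weak lower semicontinuity of the norm. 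On the finite set $B_R(x_0)$ pointwise convergence is convergence of a finite sum, so $\sum_{x\in B_R(x_0)}|u_k(x)-u(x)|^2\to 0$. Combining the two regions with $(a-b)^2\le 2a^2+2b^2$ gives $\limsup_{k\to\infty}\|u_k-u\|_2^2 \le 4\varepsilon M^2$, and letting $\varepsilon\to 0$ yields $u_k\to u$ in $\ell^2(V)$. Finally, for any $q\ge 2$ the discrete nesting $\|u_k-u\|_q \le \|u_k-u\|_2$ upgrades this to $u_k\to u$ in $\ell^q(V)$.

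The main obstacle is precisely the uniform smallness of the tails: without $(h_3)$ the mass of $\{u_k\}$ could leak to infinity and strong convergence would fail, leaving only weak and pointwise convergence. Hypothesis $(h_3)$ is exactly what converts boundedness in $H$ into equi-integrability-type control at infinity, and it is the only place in the argument where more than $(h_1)$ is used.
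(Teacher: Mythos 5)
Your proof is correct and complete. The paper itself gives no argument for this lemma, citing it as well known from [ZZ] (Zhang--Zhao), so there is no internal proof to compare against; your argument is the standard one behind that citation: the continuous embedding via $\|u\|_q\le\|u\|_2\le h_0^{-1/2}\|u\|$ on counting measure, weak compactness in the Hilbert space $H$, pointwise convergence from boundedness of the evaluation functionals, and the key compactness step---a uniform tail estimate $\sum_{x\notin B_R(x_0)}|u_k|^2\le\varepsilon\|u_k\|^2$ forced by the coercivity hypothesis $(h_3)$, combined with convergence on the finite ball and weak lower semicontinuity to control the tail of the limit $u$. Every step checks out, including the upgrade from $\ell^2$ to $\ell^q$ via the nesting $\|\cdot\|_q\le\|\cdot\|_2$ for $q\ge 2$, and you correctly identify $(h_3)$ as the sole ingredient beyond $(h_1)$ that prevents mass from escaping to infinity.
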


In order to study the sign-changing solutions, we need the following proposition, which can be seen in \cite{W4}. For completeness, we present the proof in the context.
\begin{prop}\label{o} Let $s,t>0$. Then for any $u\in H$, we have
   \begin{itemize}
      \item[(i)] $$
      \int_{V} \left|\nabla(su^{+}+tu^{-})\right|^2 d \mu=\int_{V} \left|\nabla (su^{+})\right|^2 d \mu+\int_{V}\left|\nabla (tu^{-})\right|^2 d \mu-stK_{V}(u),
      $$
 where $K_V(u)=\sum\limits_{x \in V} \sum\limits_{y \sim x}\left[u^{+}(x) u^{-}(y)+u^{-}(x) u^{+}(y)\right]\leq 0.$    
 \item[(ii)] $$
 \int_{V} \nabla\left(su^{+}+tu^{-}\right)\nabla (su^+)\, d \mu=\int_{V} \left|\nabla(s u^{+})\right|^2 d \mu-\frac{st}{2} K_{V}(u).
 $$
 \item[(iii)] $$
 \int_{V} \nabla\left(su^{+}+tu^{-}\right)\nabla (tu^-)\, d \mu=\int_{V} \left|\nabla (tu^{-})\right|^2 d \mu-\frac{st}{2} K_{V}(u).
$$
  \end{itemize} 
\end{prop}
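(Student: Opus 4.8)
The plan is to prove all three identities by a direct pointwise expansion of the gradient form, using the single structural fact that $u^{+}$ and $u^{-}$ have disjoint supports, i.e. $u^{+}(x)u^{-}(x)=0$ at every vertex $x$. Throughout I would write $w=su^{+}+tu^{-}$ and work with the bilinear gradient $\nabla u\,\nabla v(x)=\frac{1}{2}\sum_{y\sim x}(u(y)-u(x))(v(y)-v(x))$ from Section 2.

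First I would record the key cross-term computation. Expanding $(u^{+}(y)-u^{+}(x))(u^{-}(y)-u^{-}(x))$ and discarding the two diagonal products $u^{+}(y)u^{-}(y)$ and $u^{+}(x)u^{-}(x)$, which vanish because $u^{+}$ and $u^{-}$ never coexist at a vertex, gives
$$(u^{+}(y)-u^{+}(x))(u^{-}(y)-u^{-}(x))=-u^{+}(y)u^{-}(x)-u^{+}(x)u^{-}(y).$$
Summing over $x\in V$ and $y\sim x$, the right-hand side is exactly $-K_V(u)$, so that $\sum_{x}\sum_{y\sim x}(u^{+}(y)-u^{+}(x))(u^{-}(y)-u^{-}(x))=-K_V(u)$. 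The sign claim $K_V(u)\le 0$ is then immediate, since every summand $u^{+}(x)u^{-}(y)+u^{-}(x)u^{+}(y)$ is a sum of products of a nonnegative and a nonpositive factor, hence $\le 0$.

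For (i), I would expand $(w(y)-w(x))^{2}=s^{2}(u^{+}(y)-u^{+}(x))^{2}+t^{2}(u^{-}(y)-u^{-}(x))^{2}+2st(u^{+}(y)-u^{+}(x))(u^{-}(y)-u^{-}(x))$, sum over neighbors with the factor $\frac{1}{2}$, and integrate over $V$: the first two terms produce $\int_{V}|\nabla(su^{+})|^{2}\,d\mu+\int_{V}|\nabla(tu^{-})|^{2}\,d\mu$, and the cross term produces $-stK_V(u)$ by the identity above. For (ii) and (iii) the computation is the same but with only one factor of $w$ differentiated: writing out the pointwise identity $\nabla w\,\nabla(su^{+})(x)=s^{2}|\nabla u^{+}|^{2}(x)+\frac{st}{2}\sum_{y\sim x}(u^{+}(y)-u^{+}(x))(u^{-}(y)-u^{-}(x))$ and integrating gives $\int_{V}|\nabla(su^{+})|^{2}\,d\mu-\frac{st}{2}K_V(u)$, which is (ii); the symmetric computation for $\nabla w\,\nabla(tu^{-})$ yields (iii).

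The only delicate point is purely bookkeeping: one must check that the double sum $\sum_{x}\sum_{y\sim x}[u^{+}(y)u^{-}(x)+u^{+}(x)u^{-}(y)]$ really coincides with $K_V(u)$ as defined, which follows from the symmetry of the edge summation, since each unordered edge is counted in both orientations. There is no analytic obstacle: every sum is absolutely convergent because $u\in H\hookrightarrow\ell^{2}(V)$ by (\ref{ac}) and the lattice $\mathbb{Z}^{3}$ is locally finite with uniformly bounded degree, so all the rearrangements and interchanges of summation are justified.
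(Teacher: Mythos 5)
Your proof is correct and follows essentially the same route as the paper's: a direct pointwise expansion of the discrete gradient form, using the disjoint supports of $u^{+}$ and $u^{-}$ to reduce the cross term to $-K_V(u)$. Your write-up is in fact slightly more explicit than the paper's about the vanishing diagonal products, the sign of $K_V(u)$, and the convergence of the double sums, all of which the paper leaves implicit.
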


\begin{proof}
(i) A direct calculation yields that
$$
\begin{aligned}
& \int_{V} \left|\nabla(su^{+}+tu^{-})\right|^2 d \mu \\
= & \frac{1}{2} \sum_{x \in V} \sum_{y \sim x}\left[\left(su^{+}+tu^{-}\right)(y)-\left(su^{+}+tu^{-}\right)(x)\right]^{2}\\
= & \frac{1}{2} \sum_{x \in V} \sum_{y \sim x}\left[\left(su^{+}(y)-su^{+}(x)\right)^{2}+\left(tu^{-}(y)-tu^{-}(x)\right)^{2}-2st\left[u^{+}(x) u^{-}(y)+u^{-}(x) u^{+}(y)\right]\right] \\
= & \int_{V} \left|\nabla (su^{+})\right|^2 d \mu+\int_{V}\left|\nabla (tu^{-})\right|^2 d \mu-stK_{V}(u).
\end{aligned}
$$

(ii) By a direct computation, we get that
$$
\begin{aligned}
&\int_{V} \nabla\left(su^{+}+tu^{-}\right)\nabla (su^+)\, d \mu \\= & \frac{1}{2} \sum_{x \in V} \sum_{y \sim x}\left[\left(su^{+}+tu^{-}\right)(y)-\left(su^{+}+tu^{-}\right)(x)\right]\left[su^{+}(y)-su^{+}(x)\right] \\
= & \frac{1}{2} \sum_{x \in V} \sum_{y \sim x}\left[\left(su^{+}(y)-su^+(x)\right)^{2}-st\left[u^{+}(x) u^{-}(y)+u^{-}(x) u^{+}(y)\right]\right] \\
= & \int_{V} \left|\nabla(s u^{+})\right|^2 d \mu-\frac{st}{2} K_{V}(u) .
\end{aligned}
$$

(iii) Similar to (ii), we obtain that
$$
\begin{aligned}
&\int_{V} \nabla\left(su^{+}+tu^{-}\right)\nabla (tu^-)\, d \mu \\= & \frac{1}{2} \sum_{x \in V} \sum_{y \sim x}\left[\left(su^{+}+tu^{-}\right)(y)-\left(su^{+}+tu^{-}\right)(x)\right]\left[tu^{-}(y)-tu^{-}(x)\right] \\
= & \frac{1}{2} \sum_{x \in V} \sum_{y \sim x}\left[\left(tu^{-}(y)-tu^-(x)\right)^{2}-st\left[u^{+}(x) u^{-}(y)+u^{-}(x) u^{+}(y)\right]\right] \\
= & \int_{V} \left|\nabla(tu^{-})\right|^2 d \mu-\frac{st}{2} K_{V}(u) .
\end{aligned}
$$

\end{proof}

The following lemma highlights a notable distinction between the discrete case and the continuous one.

\begin{lm}\label{2.5} Let $s,t>0$. Then for any $u \in H$, we have
\begin{itemize}
    \item[(i)] $$
\begin{aligned}
J(su^++tu^-)=&J\left(su^{+}\right)+J\left(tu^{-}\right)-\frac{a}{2}stK_{V}(u)+\frac{b}{4}s^2t^2K^2_V(u)+\frac{b}{2}\left\|\nabla (su^{+})\right\|_{2}^{2}\left\|\nabla (tu^{-})\right\|_{2}^{2}\\ &-\frac{b}{2}stK_V(u)\left(\left\|\nabla (su^{+})\right\|_{2}^{2}+\left\|\nabla (tu^{-})\right\|_{2}^{2}\right)-\frac{1}{p}\int_{V}(R_{\alpha}\ast|su^+|^p)|tu^-|^p\,d\mu.
\end{aligned}
$$
\item[(ii)] $$
\begin{aligned}
(J^{\prime}(su^++tu^-),su^{+}) =&(J^{\prime}\left(su^{+}\right),su^{+})-\frac{a}{2}stK_{V}(u)+\frac{b}{2}s^2t^2K^2_V(u)+b\left\|\nabla (su^{+})\right\|_{2}^{2}\left\|\nabla (tu^{-})\right\|_{2}^{2}\\&-\frac{b}{2}stK_V(u)\left(\left\|\nabla (su^{+})\right\|_{2}^{2}+\left\|\nabla (tu^{-})\right\|_{2}^{2}\right)
-bstK_V(u)\|\nabla (su^{+})\|^2_2\\&-\int_{V}(R_{\alpha}\ast|su^+|^p)|tu^-|^p\,d\mu.
\end{aligned}
$$

\item[(iii)] $$
\begin{aligned}
(J^{\prime}(su^++tu^-),tu^{-}) &=(J^{\prime}\left(tu^{-}\right),tu^{-})-\frac{a}{2}stK_{V}(u)+\frac{b}{2}s^2t^2K^2_V(u)+b\left\|\nabla (su^{+})\right\|_{2}^{2}\left\|\nabla (tu^{-})\right\|_{2}^{2}\\&-\frac{b}{2}stK_V(u)\left(\left\|\nabla (su^{+})\right\|_{2}^{2}+\left\|\nabla (tu^{-})\right\|_{2}^{2}\right)
-bstK_V(u)\|\nabla (tu^{-})\|^2_2\\&-\int_{V}(R_{\alpha}\ast|su^+|^p)|tu^-|^p\,d\mu.
\end{aligned}
$$
\end{itemize}

\end{lm}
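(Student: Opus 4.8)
The plan is to substitute $w:=su^{+}+tu^{-}$ into the definitions of $J$ and $J'$ and then collect terms into the pieces attached to $su^{+}$ and to $tu^{-}$ separately, plus the stated cross terms. Two structural facts drive the whole computation. First, $u^{+}$ and $u^{-}$ have disjoint supports, so $u^{+}u^{-}=0$ pointwise; hence $w^{2}=s^{2}(u^{+})^{2}+t^{2}(u^{-})^{2}$, and, crucially, the pointwise identities $|w|^{p}=|su^{+}|^{p}+|tu^{-}|^{p}$ and $|w|^{p-2}w\,(su^{+})=|su^{+}|^{p}$ hold. These split the potential term $\int_{V}h(x)w^{2}\,d\mu$ and make the Choquard term tractable. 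Second, the gradient interactions are governed by Proposition \ref{o}: writing $A:=\|\nabla(su^{+})\|_{2}^{2}$, $B:=\|\nabla(tu^{-})\|_{2}^{2}$, $K:=K_{V}(u)$, part (i) gives $\int_{V}|\nabla w|^{2}\,d\mu=A+B-stK$, while parts (ii)--(iii) give the one-sided pairings $\int_{V}\nabla w\,\nabla(su^{+})\,d\mu=A-\frac{st}{2}K$ and $\int_{V}\nabla w\,\nabla(tu^{-})\,d\mu=B-\frac{st}{2}K$.

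For part (i) I would treat the three pieces of $J(w)$ in turn. The quadratic part $\frac{a}{2}\int_{V}|\nabla w|^{2}+\frac12\int_{V}h\,w^{2}$ splits, using support disjointness and Proposition \ref{o}(i), into the quadratic parts of $J(su^{+})$ and $J(tu^{-})$ plus the cross term $-\frac{a}{2}stK$. For the Kirchhoff term I expand $\frac{b}{4}(A+B-stK)^{2}$; after removing $\frac{b}{4}A^{2}$ and $\frac{b}{4}B^{2}$ (which belong to $J(su^{+})$ and $J(tu^{-})$) the remainder is exactly $\frac{b}{4}s^{2}t^{2}K^{2}+\frac{b}{2}AB-\frac{b}{2}stK(A+B)$. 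For the Choquard term I expand $\int_{V}(R_{\alpha}\ast|w|^{p})|w|^{p}$ via $|w|^{p}=|su^{+}|^{p}+|tu^{-}|^{p}$ and use the symmetry of the kernel $R_{\alpha}$, so that the bilinear pairing $\int_{V}(R_{\alpha}\ast f)g\,d\mu$ is symmetric in $f,g$; this produces the two diagonal terms together with twice the cross term $\int_{V}(R_{\alpha}\ast|su^{+}|^{p})|tu^{-}|^{p}$, which after multiplying by $-\frac{1}{2p}$ yields $-\frac{1}{p}\int_{V}(R_{\alpha}\ast|su^{+}|^{p})|tu^{-}|^{p}$. Summing the three cross contributions gives the claimed identity.

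For parts (ii) and (iii) I would repeat this bookkeeping on $(J'(w),su^{+})$ and $(J'(w),tu^{-})$. The potential pairing $\int_{V}(a\nabla w\,\nabla(su^{+})+h\,w\,su^{+})\,d\mu$ contributes the quadratic part of $(J'(su^{+}),su^{+})$ plus $-\frac{a}{2}stK$, using $w\,su^{+}=s^{2}(u^{+})^{2}$. The Kirchhoff derivative term is $b(A+B-stK)(A-\frac{st}{2}K)$; subtracting the piece $bA^{2}$ belonging to $(J'(su^{+}),su^{+})$ leaves $bAB+\frac{b}{2}s^{2}t^{2}K^{2}-\frac{b}{2}stK(A+B)-bstKA$, which is the stated Kirchhoff cross term. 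The Choquard pairing reduces, via $|w|^{p-2}w\,(su^{+})=|su^{+}|^{p}$ and kernel symmetry, to $\int_{V}(R_{\alpha}\ast|su^{+}|^{p})|su^{+}|^{p}+\int_{V}(R_{\alpha}\ast|su^{+}|^{p})|tu^{-}|^{p}$, isolating the cross term $-\int_{V}(R_{\alpha}\ast|su^{+}|^{p})|tu^{-}|^{p}$. Part (iii) is identical after interchanging the roles of $su^{+}$ and $tu^{-}$ and invoking Proposition \ref{o}(iii).

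I expect the only delicate point to be the Kirchhoff algebra: the square $(A+B-stK)^{2}$ and the mixed product $(A+B-stK)(A-\frac{st}{2}K)$ must be expanded carefully so that the asymmetric coefficients come out right — in particular the extra $-bstKA$ in (ii) versus $-bstKB$ in (iii), which trace back to the asymmetry between the full pairing $A+B-stK$ and the one-sided pairings $A-\frac{st}{2}K$, $B-\frac{st}{2}K$. Everything else is a direct consequence of the support disjointness of $u^{\pm}$, the symmetry of $R_{\alpha}$, and Proposition \ref{o}.
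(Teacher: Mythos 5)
Your proposal is correct and follows essentially the same route as the paper: expand $J$ and $J'$ at $w=su^{+}+tu^{-}$, use the disjoint supports of $u^{\pm}$ to split the potential and Choquard terms (the latter via the symmetry of $R_{\alpha}$, giving the factor $2$ on the cross term), and invoke Proposition \ref{o} for the gradient pairings before expanding the Kirchhoff products. The algebra you describe, including the asymmetric terms $-bstK_V(u)\|\nabla(su^{+})\|_2^2$ in (ii) versus $-bstK_V(u)\|\nabla(tu^{-})\|_2^2$ in (iii), matches the paper's computation exactly.
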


\begin{proof}

(i)  Note that
$$
\begin{aligned}
&\int_{V}(R_{\alpha}\ast|su^++tu^-|^p)|su^++tu^-|^p\,d\mu\\=&\int_{V}(R_{\alpha}\ast|su^+|^p)|su^+|^p\,d\mu+2\int_{V}(R_{\alpha}\ast|su^+|^p)|tu^-|^p\,d\mu\\&+\int_{V}(R_{\alpha}\ast|tu^-|^p)|tu^-|^p\,d\mu.    
\end{aligned}
$$

It follows from Proposition \ref{o} that
$$
\begin{aligned}
&J(su^++tu^-)\\=&\frac{1}{2}
\int_{V} a|\nabla (su^++tu^-)|^2\,d\mu+\frac{1}{2}\int_{V}h(x)|su^++tu^-|^2\,d \mu+\frac{b}{4} \left(\int_{V}|\nabla(su^++tu^-)|^{2}\,d \mu \right)^2\\&-\frac{1}{2p}\int_{V}(R_{\alpha}\ast|su^++tu^-|^p)|su^++tu^-|^p\,d\mu\\=&\frac{a}{2}\left(\int_{V} \left|\nabla(s u^{+})\right|^2 d \mu+\int_{V} \left|\nabla(t u^{-})\right|^2 d \mu-stK_{V}(u)\right)+\frac{1}{2}\int_{V}h(x)\left(|su^+|^2+|tu^-|^2\right)\,d \mu\\&+ \frac{b}{4}\left(\int_{V} \left|\nabla (su^{+})\right|^2 d \mu+\int_{V}\left|\nabla (tu^{-})\right|^2 d \mu-stK_{V}(u)\right)^2\\&-\frac{1}{2p}\left(\int_{V}(R_{\alpha}\ast|su^+|^p)|su^+|^p\,d\mu+2\int_{V}(R_{\alpha}\ast|su^+|^p)|tu^-|^p\,d\mu+\int_{V}(R_{\alpha}\ast|tu^-|^p)|tu^-|^p\,d\mu.\right) \\=& \frac{1}{2}\int_{V}  a|\nabla(s u^{+})|^2 +h(x)|su^+|^2\,d \mu  +\frac{b}{4}\left(\int_{V} \left|\nabla (su^{+})\right|^2 d \mu\right)^2-\frac{1}{2p}\int_{V}(R_{\alpha}\ast|su^+|^p)|su^+|^p\,d\mu\\&+\frac{1}{2}\int_{V}  a|\nabla(tu^{-})|^2 +h(x)|tu^-|^2\,d \mu +\frac{b}{4}\left(\int_{V} \left|\nabla(tu^{-})\right|^2 d \mu\right)^2-\frac{1}{2p}\int_{V}(R_{\alpha}\ast|tu^-|^p)|tu^-|^p\,d\mu\\&-\frac{a}{2}stK_{V}(u)+\frac{b}{4}s^2t^2K^2_V(u)+\frac{b}{2}\left\|\nabla (su^{+})\right\|_{2}^{2}\left\|\nabla (tu^{-})\right\|_{2}^{2}\\&-\frac{b}{2}stK_V(u)\left(\left\|\nabla (su^{+})\right\|_{2}^{2}+\left\|\nabla (tu^{-})\right\|_{2}^{2}\right)
-\frac{1}{p}\int_{V}(R_{\alpha}\ast|su^+|^p)|tu^-|^p\,d\mu\\=&J\left(su^{+}\right)+J(tu^-)-\frac{a}{2}stK_{V}(u)+\frac{b}{4}s^2t^2K^2_V(u)+\frac{b}{2}\left\|\nabla (su^{+})\right\|_{2}^{2}\left\|\nabla (tu^{-})\right\|_{2}^{2}\\&-\frac{b}{2}stK_V(u)\left(\left\|\nabla (su^{+})\right\|_{2}^{2}+\left\|\nabla (tu^{-})\right\|_{2}^{2}\right)
-\frac{1}{p} \int_{V}(R_{\alpha}\ast|su^+|^p)|tu^-|^p\,d\mu.
\end{aligned}
$$

(ii) By Proposition \ref{o}, we get that
$$
\begin{aligned}
&(J^{\prime}(su^++tu^-),su^{+})\\=&
a\int_{V} \nabla (su^++tu^-)\nabla (su^{+})\,d\mu+\int_{V}h(x)(su^++tu^-)(su^{+})\,d \mu\\&+b \int_{V}|\nabla(su^++tu^-)|^{2}\,d \mu \int_{V} \nabla (su^++tu^-) \nabla (su^+)\,d \mu\\&-\int_{V}(R_{\alpha}\ast|su^++tu^-|^p)|su^++tu^-|^{p-2}(su^++tu^-)(su^+)\,d\mu\\=& a\left(\int_{V} \left|\nabla(s u^{+})\right|^2 d \mu-\frac{st}{2} K_{V}(u)\right)+\int_{V}h(x)(su^+)^2\,d \mu\\&+ b\left(\int_{V} \left|\nabla (su^{+})\right|^2 d \mu+\int_{V}\left|\nabla (tu^{-})\right|^2 d \mu-stK_{V}(u)\right)\left(\int_{V} \left|\nabla(s u^{+})\right|^2 d \mu-\frac{st}{2} K_{V}(u)\right)\\&-\int_{V}(R_{\alpha}\ast|su^+|^p)|su^+|^p\,d\mu-\int_{V}(R_{\alpha}\ast|su^+|^p)|tu^-|^p\,d\mu\\=& \int_{V}  a|\nabla(s u^{+})|^2 +h(x)(su^+)^2\,d \mu  +b\left(\int_{V} \left|\nabla (su^{+})\right|^2 d \mu\right)^2-\int_{V}(R_{\alpha}\ast|su^+|^p)|su^+|^p\,d\mu\\&-\frac{a}{2}stK_{V}(u)+\frac{b}{2}s^2t^2K^2_V(u)+b\left\|\nabla (su^{+})\right\|_{2}^{2}\left\|\nabla (tu^{-})\right\|_{2}^{2}\\&-\frac{b}{2}stK_V(u)\left(\left\|\nabla (su^{+})\right\|_{2}^{2}+\left\|\nabla (tu^{-})\right\|_{2}^{2}\right)
-bstK_V(u)\|\nabla (su^{+})\|^2_2\\&-\int_{V}(R_{\alpha}\ast|su^+|^p)|tu^-|^p\,d\mu\\=&(J^{\prime}\left(su^{+}\right),su^{+})-\frac{a}{2}stK_{V}(u)+\frac{b}{2}s^2t^2K^2_V(u)+b\left\|\nabla (su^{+})\right\|_{2}^{2}\left\|\nabla (tu^{-})\right\|_{2}^{2}\\&-\frac{b}{2}stK_V(u)\left(\left\|\nabla (su^{+})\right\|_{2}^{2}+\left\|\nabla (tu^{-})\right\|_{2}^{2}\right)
-bstK_V(u)\|\nabla (su^{+})\|^2_2\\&-\int_{V}(R_{\alpha}\ast|su^+|^p)|tu^-|^p\,d\mu.
\end{aligned}
$$
 (iii) By similar arguments as above, we obtain the desired result. We omit here.

\end{proof}

\begin{rem}
 Let $s=t=1$ in Corollary \ref{2.5}. Then for $u\in H\backslash\{0\}$, we have 
 \begin{itemize} 
     \item[(a)] the following results state the difference between the discrete and continuous cases.
     $$J_\lambda(u)\neq J_\lambda(u^+)+J_\lambda(u^-)+\frac{b}{2}\left\|\nabla u^{+}\right\|_{2}^{2}\left\|\nabla u^{-}\right\|_{2}^{2}-\frac{1}{p} \int_{V}(R_{\alpha}\ast|u^+|^p)|u^-|^p\,d\mu,$$
and $$(J^{\prime}(u),u^{\pm})\neq (J^{\prime}\left(u^{\pm}\right),u^{\pm})+b\left\|\nabla u^{+}\right\|_{2}^{2}\left\|\nabla u^{-}\right\|_{2}^{2}-\int_{V}(R_{\alpha}\ast|u^+|^p)|u^-|^p\,d\mu;$$

\item[(b)] in fact, from the proof of (ii) in Corollary \ref{2.5}, one sees that $(J'(u_k),u_k^{\pm})$ can also be expressed by
\begin{equation*}\label{3.8}
(J'(u_k),u_k^{\pm})=\|u_k^{\pm}\|^{2}+b\left\|\nabla u_k\right\|_{2}^{2}\left(\|\nabla u_k^{\pm}\|_{2}^{2}-\frac{1}{2}K_V(u_k)\right)-\frac{a}{2}K_{V}(u_k)-\int_{V}\left(R_{\alpha}*\left|u_k\right|^{p}\right)\left|u_k^{\pm}\right|^{p} \,d\mu.
\end{equation*}
 \end{itemize}
 
\end{rem}

\section{Proof of theorem \ref{t2}}

In this section, we prove the existence of ground state solutions to the equation (\ref{aa}). We first show that the functional $J(u)$ satisfies the mountain-pass geometry.
\begin{lm}\label{lm6}
Let $p>2$. Then
\begin{itemize}
\item[(i)] there exist $\theta, \rho>0$ such that $J(u)\geq\theta>0$ for $\|u\|=\rho$;
\item[(ii)] there exists $e\in H$ with $\|e\|>\rho$ such that $J(e)< 0$.
\end{itemize}
\end{lm}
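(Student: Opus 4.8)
The plan is to verify the two parts of the mountain-pass geometry by the standard estimates, using the Hardy--Littlewood--Sobolev bound recorded in Remark \ref{rm} to control the Choquard term, and exploiting that $p>2$ makes the nonlinearity super-quartic.

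For part (i), I would start from the definition of $J$ and simply drop the nonnegative Kirchhoff term $\frac{b}{4}\big(\int_{\mathbb Z^3}|\nabla u|^2\,d\mu\big)^2\ge 0$, so that
$$J(u)\ge \frac12\|u\|^2-\frac{1}{2p}\int_{\mathbb Z^3}(R_\alpha\ast|u|^p)|u|^p\,d\mu.$$
Since $\alpha\in(0,3)$ gives $\frac{3+\alpha}{3}<2<p$, the inequality (\ref{f1}) applies and yields $\int_{\mathbb Z^3}(R_\alpha\ast|u|^p)|u|^p\,d\mu\le C\|u\|^{2p}$. Hence $J(u)\ge \frac12\|u\|^2-\frac{C}{2p}\|u\|^{2p}=\|u\|^2\big(\frac12-\frac{C}{2p}\|u\|^{2p-2}\big)$. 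Because $2p-2>0$, choosing $\rho>0$ small enough that $\frac{C}{2p}\rho^{2p-2}\le\frac14$ forces $J(u)\ge\frac14\rho^2=:\theta>0$ whenever $\|u\|=\rho$.

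For part (ii), I would fix any $u_0\in H\setminus\{0\}$ and evaluate $J$ along the ray $tu_0$, $t>0$. By the homogeneity of each term,
$$J(tu_0)=\frac{t^2}{2}\|u_0\|^2+\frac{bt^4}{4}\Big(\int_{\mathbb Z^3}|\nabla u_0|^2\,d\mu\Big)^2-\frac{t^{2p}}{2p}\int_{\mathbb Z^3}(R_\alpha\ast|u_0|^p)|u_0|^p\,d\mu.$$
The convolution integral is strictly positive since $R_\alpha>0$ and $u_0\not\equiv 0$; moreover $p>2$ gives $2p>4$, so the last term, carrying the highest power of $t$, dominates and $J(tu_0)\to-\infty$ as $t\to\infty$. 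Therefore one may take $e=t_0u_0$ with $t_0$ chosen so large that simultaneously $\|e\|=t_0\|u_0\|>\rho$ and $J(e)<0$.

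The argument is entirely elementary, so I do not expect a genuine obstacle; the only point requiring attention is the comparison of exponents ($2p>2$ for part (i) and $2p>4$ for part (ii)), which is exactly where the assumption $p>2$ is used, together with the strict positivity of $\int_{\mathbb Z^3}(R_\alpha\ast|u_0|^p)|u_0|^p\,d\mu$ that keeps the leading term from vanishing along the ray.
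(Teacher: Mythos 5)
Your proof is correct and follows essentially the same route as the paper: for (i) you discard the nonnegative Kirchhoff term and apply the discrete HLS bound (\ref{f1}) to get $J(u)\geq \frac{1}{2}\|u\|^{2}-\frac{C}{2p}\|u\|^{2p}$, then choose $\rho$ small using $2p-2>0$; for (ii) you evaluate $J$ along a ray $tu_0$ and let the $t^{2p}$ term (with $2p>4$) dominate. Your explicit checks that $\frac{3+\alpha}{3}<2<p$ so that (\ref{f1}) applies, and that the convolution integral is strictly positive, are details the paper leaves implicit but are correct.
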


\begin{proof}
(i) By the HLS inequality (\ref{f1}), we get that
\begin{eqnarray*}
J(u)&=&\frac{1}{2}\|u\|^{2}+\frac{b}{4}\left(\int_{V}|\nabla u|^{2} d \mu\right)^{2}-\frac{1}{2p}\int_{V}(R_{\alpha}\ast|u|^p)|u|^p\,d\mu\\&\geq&
\frac{1}{2}\|u\|^{2}-\frac{C}{2p}\|u\|^{2p}.
\end{eqnarray*}
Since $p>2$, there exist $\theta>0$ and $\rho>0$ small enough such that $J(u)\geq\theta>0$ for $\|u\|=\rho$.

\
\

(ii)
Note that $J(0)=0$. Moreover, for any $u\in H\backslash\{0\}$, as $t\rightarrow\infty$, one gets that
\begin{eqnarray*}
J(tu)=\frac{t^2}{2}\|u\|^{2}+\frac{bt^4}{4}\left(\int_{\mathbb{Z}^3}|\nabla u|^{2} d \mu\right)^{2}-\frac{t^{2p}}{2p}\int_{V}(R_{\alpha}\ast|u|^p)|u|^p\,d\mu\rightarrow
-\infty.
\end{eqnarray*}
Therefore, there exists $t_0>0$ large enough such that $\|e\|>\rho$ with $e=t_0 u$ and $J(e)<0$.

\end{proof}

\begin{lm}\label{lgj}
 Let $p>2$ and $\left(h_{1}\right)$ hold. Then for  $u \in \mathcal{N}$,
there exists $\eta>0$ such that $\|u\| \geq \eta$. 
\end{lm}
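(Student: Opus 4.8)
The plan is to exploit the Nehari constraint $(J'(u),u)=0$ to bound $\|u\|$ away from zero. First I would write the constraint out explicitly: testing $J'(u)$ against $u$ itself, a membership $u\in\mathcal{N}$ means
\[
\|u\|^2 + b\left(\int_V |\nabla u|^2\,d\mu\right)^2 = \int_V (R_{\alpha}\ast|u|^p)|u|^p\,d\mu.
\]
Since the Kirchhoff term $b\left(\int_V |\nabla u|^2\,d\mu\right)^2$ is nonnegative, this at once yields the clean inequality
\[
\|u\|^2 \leq \int_V (R_{\alpha}\ast|u|^p)|u|^p\,d\mu.
\]

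Next I would estimate the right-hand side from above using the HLS bound (\ref{f1}). Because $p>2$ and $\alpha\in(0,3)$ we have $p>\frac{3+\alpha}{3}$, so Remark \ref{rm} applies and gives $\int_V (R_{\alpha}\ast|u|^p)|u|^p\,d\mu \leq C\|u\|^{2p}$. Combining the two displays produces $\|u\|^2 \leq C\|u\|^{2p}$. Since $u\in\mathcal{N}\subset H\backslash\{0\}$ we have $\|u\|>0$, so dividing by $\|u\|^2$ and using $2p-2>0$ gives $\|u\|^{2p-2}\geq \frac{1}{C}$, i.e. $\|u\|\geq \left(\frac{1}{C}\right)^{\frac{1}{2p-2}}=:\eta>0$.

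I do not anticipate any genuine obstacle here: this is the standard uniform-distance-from-zero property of a Nehari manifold, and the whole argument is two inequalities followed by a division. The only points deserving a moment's care are (a) discarding the positive Kirchhoff term so as to reduce to an inequality involving only $\|u\|^2$ rather than the fourth-order gradient expression, and (b) checking that the exponent condition $p\ge\frac{3+\alpha}{3}$ required by Remark \ref{rm} is indeed guaranteed by the hypothesis $p>2$. The assumption $(h_1)$ enters only implicitly, in ensuring that $\|\cdot\|$ is a genuine norm on $H$ and that the embedding constant $C$ appearing in (\ref{f1}) is finite.
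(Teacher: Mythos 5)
Your proposal is correct and follows essentially the same route as the paper: drop the nonnegative Kirchhoff term from the Nehari identity, bound the convolution term by $C\|u\|^{2p}$ via the HLS inequality (\ref{f1}), and divide by $\|u\|^2>0$ to conclude. The extra check that $p>2>\frac{3+\alpha}{3}$ (since $\alpha<3$) justifying the use of Remark \ref{rm} is a nice touch the paper leaves implicit.
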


\begin{proof}
For $u \in\mathcal{N}$, by the HLS inequality (\ref{f1}), we have
$$
\begin{aligned}
0&=\left(J^{\prime}(u), u\right)\\ & =\|u\|^{2}+b\left(\int_{V}|\nabla u|^{2} d \mu\right)^{2}-\int_{V}(R_{\alpha}\ast|u|^p)|u|^p\,d\mu\\& \geq
\|u\|^{2}-C\|u\|^{2p}.
\end{aligned}
$$
Since $p>2$, we get easily that there exists a constant $\eta>0$ such that $\|u\| \geq \eta>0$.
\end{proof}

\
\

{\bf Proof of Theorem \ref{t2}. }   
By Lemma \ref{lm6}, one sees that $J$ satisfies the mountain-pass geometry. Hence there exists a sequence $\{u_k\}\subset H$ such that
$$J\left(u_k\right) \rightarrow c,\qquad\text{and}\qquad J^{\prime}\left(u_k\right) \rightarrow 0, \quad k\rightarrow\infty.$$  Note that $p>2$ and $b>0$. Then we get that
\begin{eqnarray}\label{ba}
\|u_k\|^2\nonumber&=&\frac{1}{p}\int_{V}(R_\alpha \ast |u_k|^p)|u_k|^p\,d\mu-\frac{b}{2}\left(\int_{V}\left|\nabla u_k\right|^{2} d \mu\right)^{2}+2c+o_k(1)\nonumber\\&\leq&\frac{1}{p} \left(\|u_k\|^2+b\left(\int_{V}\left|\nabla u_k\right|^{2} d \mu\right)^{2}+o_k(1)\|u_k\|\right)-\frac{b}{2}\left(\int_{V}\left|\nabla u_k\right|^{2} d \mu\right)^{2}+2c+o_k(1)\nonumber\\&=&\frac{1}{p}\|u_k\|^2+o_k(1)\|u_k\|+2c+o_k(1),
\end{eqnarray}
where $o_k(1)\rightarrow 0$ as $k\rightarrow\infty$. This inequality implies that $\{u_k\}$ is bounded in $H$. Hence there exists $u \in H$ such that $$u_k \rightharpoonup u,\quad\text{in~}H, \qquad\text{and}\qquad u_k\rightarrow u,\quad\text{pointwise~in}~V.$$
Moreover, we get that $\{u_k\}\subset\mathcal{N}$ since $0\leq |(J'(u_k),u_k)|\leq o_k(1)\|u_k\|\rightarrow 0.$

If \begin{equation}\label{hq}
 \left\|u_k\right\|_{\infty} \rightarrow 0,\quad k\rightarrow\infty,   
\end{equation} 
then by Lemma \ref{lgh}, we have that $u_k \rightarrow 0$ in $\ell^{t}\left(V\right)$ with $t>2$. Hence  \begin{eqnarray*}
\int_{V}(R_\alpha \ast |u_k|^p)|u_k|^p\,d\mu&\leq& C\|u_k\|^{2p}_{\frac{6p}{3+\alpha}}\\&\rightarrow& 0,\quad k\rightarrow\infty,
\end{eqnarray*}
that is
$$\int_{V}(R_\alpha \ast |u_k|^p)|u_k|^p\,d\mu=o_{k}(1).$$  Then
$$
\begin{aligned}
0=\left( J^{\prime}\left(u_k\right), u_k\right) & =\left\|u_k\right\|^{2}+b\left(\int_{V}\left|\nabla u_k\right|^{2} d \mu\right)^{2}-\int_{V}(R_\alpha \ast |u_k|^p)|u_k|^p\,d\mu\\
& \geq\left\|u_k\right\|^{2}+o_{k}(1),
\end{aligned}
$$
which implies that $\left\|u_k\right\| \rightarrow 0$ as $k\rightarrow\infty$. This contradicts $\left\|u_k\right\| \geq \eta>0$ in Lemma \ref{lgj}. Hence (\ref{hq}) does not hold, and hence there exists $\delta>0$ such that
\begin{equation}\label{hw}
\liminf _{k\rightarrow\infty}\left\|u_k\right\|_{\infty} \geq \delta>0,
\end{equation}
which implies that $u\neq 0$.
Therefore, there exists a sequence $\left\{y_k\right\} \subset V$ such that
\begin{equation*}
\left|u_k(y_k)\right| \geq \frac{\delta}{2}.
\end{equation*}
Let $\{z_k\}\subset V$ satisfy $\left\{y_k-z_k \tau\right\} \subset \Omega$, where $\Omega=[0, \tau)^{3}$. By translations, let $v_k(y):=u_k\left(y+z_k \tau\right)$. Then for any $v_k$,
\begin{equation*}
\left\|v_k\right\|_{l^{\infty}(\Omega)} \geq\left|v_k\left(y_k-z_k \tau\right)\right|=\left|u_k(y_k)\right| \geq \frac{\delta}{2}>0.
\end{equation*}
Since $h$ is $\tau$-periodic, $J$ and $\mathcal{N}$ are invariant under the translation, we obtain that $\left\{v_k\right\}$ is also a $(PS)_c$ sequence for $J$ and bounded in $H$. Then there exists $v\in H$ with $v\neq 0$ such that $$v_k \rightharpoonup v,\quad\text{in~}H, \qquad\text{and}\qquad v_k\rightarrow v,\quad \text{pointwise~in~}V.$$
Moreover, we also get that $\{v_k\}\subset\mathcal{N}.$ Now we prove that $v$ is a critical point of $J$. Let $A\geq 0$ be a constant such that $\int_{V}|\nabla v_k|^{2} d \mu \rightarrow A$ as $k\rightarrow\infty$. Note that
$$
\int_{V}|\nabla v|^{2} d \mu \leq \liminf _{k\rightarrow\infty} \int_{V}\left|\nabla v_k\right|^{2} d \mu=A.
$$
We claim that
\begin{equation}\label{55}
 \int_{V}|\nabla v|^{2} d \mu=A.   
\end{equation}
Arguing by contradiction, we assume that $\int_{V}|\nabla v|^{2} d \mu<A$. For any $\phi\in C_c(V),$ we have $( J^{\prime}\left(v_k\right),\phi)=o_k(1)$, namely
 \begin{equation}\label{hr}
\int_{V}\left(a \nabla v_k \nabla \varphi+h(x) v_k \varphi\right)\,d \mu+b \int_{V}\left|\nabla v_k\right|^{2} \,d\mu \int_{V} \nabla v_k \nabla \varphi\,d \mu-\int_{V}(R_\alpha \ast |v_k|^p)|v_k|^{p-2}v_k\phi\,d\mu=o_{k}(1).
\end{equation}
Let $k\rightarrow\infty$ in (\ref{hr}), we get that
\begin{equation}\label{ht}
\int_{V}(a \nabla v \nabla \varphi+h(x) v \varphi)\, d \mu+b A \int_{V} \nabla v \nabla \varphi\,d \mu-\int_{V}(R_\alpha \ast |v|^p)|v|^{p-2}v\phi\,d\mu=0.
\end{equation}
Since $C_c(V)$ is dense in $H$,  (\ref{ht}) holds for any $\phi\in H$. Let $\phi=v$ in (\ref{ht}), then we have 
$$
\begin{aligned}
( J'(v),v) & =\int_{V}\left(a|\nabla v|^{2}+h(x) v^{2}\right) d \mu+b\left(\int_{V}|\nabla v|^{2} d \mu\right)^{2}-\int_{V}(R_\alpha \ast |v|^p)|v|^{p-2}v\phi\,d\mu\\&<\int_{V}\left(a|\nabla v|^{2}+h(x) v^{2}\right) d \mu+b A\int_{V}|\nabla v|^{2} d \mu-\int_{V}(R_\alpha \ast |v|^p)|v|^{p-2}v\phi\,d\mu\\&=0.
\end{aligned}
$$
Let $$h(s)=\left(J^{\prime}(sv), s v\right),\quad s>0.$$ 
Then $h(1)=\left( J^{\prime}(v), v\right)<0$.

By the HLS inequality (\ref{f1}), we get, for $s>0$ small enough, that  
\begin{eqnarray}\label{he}
h(s)\nonumber&=&\left( J^{\prime}(sv), s v\right)\nonumber\\&=& s^2 \|v\|^2+s^4 b\left(\int_{V}|\nabla v|^{2} d \mu\right)^{2}-s^{2p}\int_{V}(R_\alpha \ast |v|^p)|v|^{p}\,d\mu\nonumber\\&\geq& s^2 \|v\|^2-Cs^{2p}\|v\|^{2p}\nonumber\\&>&0.
\end{eqnarray}
Hence, there exists $s_{0} \in(0,1)$ such that $h\left(s_{0}\right)=0$, i.e. $ \left( J^{\prime}\left(s_{0} v\right), s_{0} v\right)=0$. This means that $s_0v\in\mathcal{N}$, and hence $J\left(s_0 v\right)\geq c$. 
Then by Fatou's lemma, we obtain that
$$
\begin{aligned}
c & \leq J\left(s_0 v\right)=J\left(s_0 v\right)-\frac{1}{4}\left( J^{\prime}\left(s_0 v\right), s_0 v\right) \\
& =\frac{s_0^{2}}{4} \|v\|^2+(\frac{1}{4}-\frac{1}{2p})s_0^{2p}\int_{V}(R_\alpha \ast |v|^p)|v|^{p}\,d\mu\\
& <\frac{1}{4}\|v\|^2+(\frac{1}{4}-\frac{1}{2p})\int_{V}(R_\alpha \ast |v|^p)|v|^{p}\,d\mu\\
& \leq \liminf _{k\rightarrow\infty}\left[\frac{1}{4}\|v_k\|^2+(\frac{1}{4}-\frac{1}{2p})\int_{V}(R_\alpha \ast |v_k|^p)|v_k|^{p}\,d\mu\right] \\
& =\liminf _{k\rightarrow\infty}\left[J\left(v_k\right)-\frac{1}{4}\left( J^{\prime}\left(v_k\right), v_k\right)\right] \\
& =c.
\end{aligned}
$$
This is a contradiction. Hence, \begin{equation*}
\int_{V}\left|\nabla v_k\right|^{2} d \mu \rightarrow \int_{V}|\nabla v|^{2} d \mu=A.
\end{equation*}
The claim (\ref{55}) is completed. Then by (\ref{55}), (\ref{hr}) and (\ref{ht}), we get that $J^{\prime}(v)=0$, i.e. $v \in \mathcal{N}$. 
It remains to prove that $J(v)=c$. In fact, by Fatou's lemma, we obtain that
$$
\begin{aligned}
c & \leq J(v)-\frac{1}{4}\left( J^{\prime}(v), v\right) \\
& =\frac{1}{4}\|v\|^2+(\frac{1}{4}-\frac{1}{2p})\int_{V}(R_\alpha \ast |v|^p)|v|^{p}\,d\mu\\
& \leq \liminf _{k\rightarrow\infty}\left[\frac{1}{4}\|v_k\|^2+(\frac{1}{4}-\frac{1}{2p})\int_{V}(R_\alpha \ast |v_k|^p)|v_k|^{p}\,d\mu\right] \\
& =\liminf _{k\rightarrow\infty}\left[J\left(v_k\right)-\frac{1}{4}\left( J^{\prime}\left(v_k\right), v_k\right)\right] \\
& =c .
\end{aligned}
$$
Hence $J(v)=c$. \qed

\
\

\section{Proof of Theorem \ref{t0}}
In this section, we prove the existence of ground state sign-changing solutions to the equation (\ref{aa}). First, we prove a lemma that plays a crucial role in our proof of Theorem \ref{t0}.

\begin{lm}\label{4}
    
Let $p>4$. If $u \in H$ with $u^{ \pm} \neq 0$, then there exists a unique positive number pair $\left(s_{u}, t_{u}\right)$ such that $s_{u} u^{+}+t_{u} u^{-} \in \mathcal{M}$ and 
$$J(s_uu^++t_uu^-)=\max\limits_{s,t\geq 0} J(su^++tu^-).$$
\end{lm}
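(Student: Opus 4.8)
The plan is to reduce the two-parameter problem to the study of the fibering map $g:[0,\infty)^2\to\mathbb{R}$, $g(s,t)=J(su^++tu^-)$. For $s,t>0$ one has $(su^++tu^-)^+=su^+$ and $(su^++tu^-)^-=tu^-$, so the chain rule gives $s\,\partial_s g(s,t)=(J'(su^++tu^-),su^+)$ and $t\,\partial_t g(s,t)=(J'(su^++tu^-),tu^-)$. Consequently, for $s,t>0$ the function $su^++tu^-$ lies in $\mathcal{M}$ precisely when $(s,t)$ is an interior critical point of $g$. The lemma therefore amounts to proving that $g$ possesses a unique interior critical point and that this point is the global maximum of $g$ over $[0,\infty)^2$.

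First I would establish existence of the maximum and locate it in the interior. Expanding $g$ by Lemma \ref{2.5}(i) and setting $\kappa:=-K_V(u)\ge 0$, the functional $g$ equals a sum of nonnegative quadratic and Kirchhoff contributions of degree at most $4$ (coming from $\|u^\pm\|^2$, $\|\nabla u^\pm\|_2^2$ and the $\kappa$-terms) minus the Choquard terms $\tfrac{1}{2p}(s^{2p}D^{++}+t^{2p}D^{--})+\tfrac1p s^pt^pD^{+-}$, where $D^{++},D^{--},D^{+-}>0$ since $R_\alpha>0$ and $u^\pm\neq 0$. Because $p>4$ gives $2p>4$, along every ray the degree-$2p$ terms dominate and $g(s,t)\to-\infty$ as $s^2+t^2\to\infty$, uniformly in direction; since $g(0,0)=0$ and $g>0$ near the origin, $g$ attains a positive global maximum at some $(s_u,t_u)$. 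To push it into the interior, suppose $s_u=0$ and $t_u>0$; expanding $g(s,t_u)-g(0,t_u)$ for small $s$ yields leading term $\big(\tfrac a2\kappa t_u+\tfrac b2\kappa t_u^3\|\nabla u^-\|_2^2\big)s$ when $\kappa>0$ and $\big(\tfrac12\|u^+\|^2+\tfrac b2 t_u^2\|\nabla u^+\|_2^2\|\nabla u^-\|_2^2\big)s^2$ when $\kappa=0$ — positive in both cases because $p>4>2$ renders all coupling terms of higher order — contradicting maximality. Hence $s_u,t_u>0$, so $s_uu^++t_uu^-\in\mathcal{M}$ and realizes $\max_{s,t\ge 0}J(su^++tu^-)$.

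The main obstacle is uniqueness, i.e.\ that $g$ has no other interior critical point. I would prove this by a monotonicity argument built on the strict ordering of exponents $2<4<2p$ forced by $p>4$. For each fixed $t>0$ the quotient $s\mapsto (J'(su^++tu^-),su^+)/s^2$ tends to a strictly positive limit as $s\to 0^+$ and to $-\infty$ as $s\to\infty$; using that at any of its zeros the degree-$2p$ Choquard terms outweigh the degree-$4$ Kirchhoff terms, one checks that its derivative is strictly negative there, so the first equation $(J'(su^++tu^-),su^+)=0$ has a unique root $s=S(t)$, depending continuously and monotonically on $t$, and symmetrically the second equation defines a unique $t=T(s)$. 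A unique intersection of the curves $s=S(t)$ and $t=T(s)$ then pins down the critical point. The delicate step is controlling the Kirchhoff coupling $b\|\nabla u\|_2^2$, the Choquard coupling $D^{+-}$, and the $\kappa$-terms so that this monotone structure survives; this is precisely the place where the stronger hypothesis $p>4$ (rather than $p>2$ as in Theorem \ref{t2}) is indispensable, since only then does the nonlinear term dominate the quartic Kirchhoff term. Equivalently, one may reformulate uniqueness as the assertion that for $v\in\mathcal{M}$ the map $(s,t)\mapsto J(sv^++tv^-)$ is maximized only at $(1,1)$, which transfers the whole analysis to a single fibering.
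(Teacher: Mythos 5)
Your proposal follows essentially the same route as the paper's proof: the fibering map $f(s,t)=J(su^{+}+tu^{-})$, positivity near the origin plus coercivity from the degree-$2p$ Choquard terms to obtain a global maximizer, exclusion of the boundary by expanding in the small variable (the paper shows $f_t(s_u,t)>0$ for $t$ small, which is your expansion in symmetric form), and uniqueness of the root of $(J'(su^{+}+tu^{-}),su^{+})=0$ in $s$ for fixed $t$ by exploiting the exponent gap between the positive terms (degrees $1$ through $4$) and the negative terms (degrees $p$ and $2p$): your observation that the derivative of the rescaled map is strictly negative at every zero precisely because $p>4$ is the same computation as the paper's subtraction argument for $s_{u_1}<s_{u_2}$. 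Two remarks. First, the step you flag as delicate --- passing from ``a unique $S(t)$ for each $t$ and a unique $T(s)$ for each $s$'' to uniqueness of the pair --- is indeed not supplied by your monotonicity heuristic (two monotone curves can intersect more than once); but the paper's own proof makes exactly the same leap, proving only uniqueness of $s$ with $t=t_u$ fixed and of $t$ with $s=s_u$ fixed before asserting uniqueness of the pair, so relative to the paper this is not an additional gap. Second, a small inaccuracy: when $K_V(u)<0$ the quotient $(J'(su^{+}+tu^{-}),su^{+})/s^{2}$ does not tend to a finite positive limit as $s\to 0^{+}$ (the term $-\tfrac{a}{2}stK_V(u)/s^{2}$ blows up); it tends to $+\infty$, which is still positive and is all your argument uses. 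Your explicit treatment of the degenerate case $K_V(u)=0$ is in fact more careful than the paper, which asserts $K_V(u)<0$ without justification.
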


\begin{proof}
For any $u \in H$ with $u^{ \pm} \neq 0$, we define the function $f:[0, \infty) \times[0, \infty) \rightarrow \mathbb{R}$ given by
$$
f(s,t)=J\left(s u^{+}+tu^{-}\right).
$$
By a simple calculation, we get that
$$
\begin{aligned}
\nabla f(s,t)=&\left(f_s(s,t), f_t(s,t)\right) \\
 =&\left(\left(J^{\prime}\left(su^{+}+t u^{-}\right), u^{+}\right),\left( J^{\prime}\left(su^{+}+tu^{-}\right), u^{-}\right)\right).
\end{aligned}
$$
Note that $p>4$ and $K_V(u)<0$. Clearly,  for $(s,t)$ small enough, we have that
$$
\begin{aligned}
f(s,t)=& \frac{1}{2} s^{2}\left\|u^{+}\right\|^{2}+\frac{b}{4} s^{4}\|\nabla u^{+}\|_{2}^{4} -\frac{1}{2 p} s^{2 p} \int_{V}\left(R_{\alpha}* \left|u^{+}\right|^{p}\right) \left|u^{+}\right|^{p}\,d\mu\\&+\frac{1}{2} t^{2}\left\|u^{-}\right\|^{2}+\frac{b}{4} t^{4}\|\nabla u^{-}\|_{2}^{4} -\frac{1}{2 p} t^{2 p} \int_{V}\left(R_{\alpha}* \left|u^{-}\right|^{p}\right) \left|u^{-}\right|^{p}\,d\mu\\
&-\frac{a}{2}stK_{V}(u)+\frac{b}{4}s^2t^2K^2_V(u)+\frac{b}{2}s^2t^2\left\|\nabla u^{+}\right\|_{2}^{2}\left\|\nabla u^{-}\right\|_{2}^{2}\\&-\frac{b}{2}stK_V(u)\left(s^2\|\nabla u^{+}\|_{2}^{2}+t^2\|\nabla u^{-}\|_{2}^{2}\right)
-\frac{1}{p}s^pt^p\int_{V}(R_{\alpha}\ast|u^+|^p)|u^-|^p\,d\mu\\>&0.
\end{aligned}
$$
On the other hand, by elementary inequalities $2ab\leq (a^2+b^2)$ and $(a^p+b^p)\leq (a+b)^p\leq 2^{p-1}(a^p+b^p)$ with $a,b\geq 0$, we get that
$$
\begin{aligned}
f(s,t)=  & \frac{1}{2} s^{2}\left\|u^{+}\right\|^{2}+\frac{b}{4} s^{4}\|\nabla u^{+}\|_{2}^{4}-\frac{1}{2 p} s^{2 p} \int_{V}\left(R_{\alpha}* \left|u^{+}\right|^{p}\right) \left|u^{+}\right|^{p}\,d\mu\\&+\frac{1}{2} t^{2}\left\|u^{-}\right\|^{2}+\frac{b}{4} t^{4}\|\nabla u^{-}\|_{2}^{4} -\frac{1}{2 p} t^{2 p} \int_{V}\left(R_{\alpha}* \left|u^{-}\right|^{p}\right) \left|u^{-}\right|^{p}\,d\mu\\
&-\frac{a}{2}stK_{V}(u)+\frac{b}{4}s^2t^2K^2_V(u)+\frac{b}{2}s^2t^2\left\|\nabla u^{+}\right\|_{2}^{2}\left\|\nabla u^{-}\right\|_{2}^{2}\\&-\frac{b}{2}stK_V(u)\left(s^2\|\nabla u^{+}\|_{2}^{2}+t^2\|\nabla u^{-}\|_{2}^{2}\right)
-\frac{1}{p}s^pt^p\int_{V}(R_{\alpha}\ast|u^+|^p)|u^-|^p\,d\mu\\ \leq & \frac{1}{2} (s^{2}+t^2)\max\{\left\|u^{+}\right\|^{2},\left\|u^{-}\right\|^{2}\}+\frac{b}{4} (s^{2}+t^2)^2\max\left\{\|\nabla u^{+}\|_{2}^{4},\|\nabla u^{-}\|_{2}^{4}\right\}\\& -\frac{a}{2}(s^2+t^2)K_{V}(u)+\frac{b}{4}(s^2+t^2)^2 K^2_V(u)+\frac{b}{2}(s^2+t^2)^2\left\|\nabla u^{+}\right\|_{2}^{2}\left\|\nabla u^{-}\right\|_{2}^{2}\\&-\frac{b}{2}(s^2+t^2)^2K_V(u)\max\{\|\nabla u^{+}\|_{2}^{2},\|\nabla u^{-}\|_{2}^{2}\}\\&-\frac{1}{2^pp} (s^{2} +t^{2})^p\min\{\int_{V}\left(R_{\alpha}* \left|u^{+}\right|^{p}\right) \left|u^{+}\right|^{p}\,d\mu,\int_{V}\left(R_{\alpha}* \left|u^{-}\right|^{p}\right) \left|u^{-}\right|^{p}\,d\mu\} \\ \rightarrow &-\infty,\quad (s^2+t^2)\rightarrow\infty.
\end{aligned}
$$
Then by the continuity of $f$, $f(s,t)>0$ for $(s,t)$ small and $f(s,t)\rightarrow-\infty $ as $|(s,t)|\rightarrow\infty$, there exists a pair of  $(s_u,t_u)$ such that 
$$f(s_u,t_u)=\max\limits_{s,t\geq 0}f(s,t),$$
which is $$J(s_uu^++t_uu^-)=\max\limits_{s,t\geq 0} J(su^++tu^-).$$

We claim that $s_u,t_u>0.$ Indeed, without loss of generality, assuming the pair of $(s_u,0)$ is a maximum point of $f(s,t)$, we get that
\begin{equation}\label{3.2}
\begin{aligned}
f_{t}(s_u, t) =& t\left\|u^{-}\right\|^{2}+b t^{3}\|\nabla u^{-}\|_{2}^{4}-\frac{a}{2}s_uK_{V}(u)+\frac{b}{2}s_u^2tK^2_V(u)+b s_u^2 t\left\|\nabla u^{+}\right\|_{2}^{2}\left\|\nabla u^{-}\right\|_{2}^{2} \\&-\frac{b}{2}s_uK_V(u)\left(s_u^2\|\nabla u^{+}\|_{2}^{2}+t^2\|\nabla u^{-}\|_{2}^{2}\right)-bs_ut^2K_V(u)\|\nabla u^{-}\|_{2}^{2}\\
& -t^{2 p-1} \int_{V}\left(R_{\alpha}*\left|u^{-}\right|^{p}\right)\left|u^{-}\right|^{p} \,d\mu-s_u^{p} t^{p-1} \int_{V}\left(R_{\alpha}*\left|u^{+}\right|^{p}\right)\left|u^{-}\right|^{p} \,d\mu.
\end{aligned}
\end{equation}
Since $p>4$, we see that $f_t(s_u,t)>0$ for $t$ small enough, which implies that $f(s_u,t)$ is increasing for $t$ small. This contradicts that the pair of $(s_u,0)$ is a maximum point of $f(s,t)$. Hence $t_u>0.$ Similarly, we have that $s_u>0$. Consequently, $(s_u,t_u)$ is a positive maximum point of
$f(s,t).$ 

Next, we prove that $\left(s_{u} u^{+}+t_{u} u^{-}\right)\in \mathcal{M}$. Note that $\left(s_{u} u^{+}+t_{u} u^{-}\right)\in \mathcal{M}$ is equivalent to $\nabla f(s_u,t_u)=0$. Since the pair of $(s_u, t_u)$ is
a positive maximum point of $f(s,t)$, we get that 
$$f_s(s_u, t_u)=f_t(s_u, t_u)=0,$$
namely
$$\frac{1}{s_u}\left(J^{\prime}\left(s_uu^{+}+t_u u^{-}\right), s_uu^{+}\right)= \frac{1}{t_u}\left(J^{\prime}\left(s_uu^{+}+t_u u^{-}\right), t_uu^{-}\right)=0.$$
By the definition of sign-changing Nehari manifold, we obtain that $\left(s_{u} u^{+}+t_{u} u^{-}\right)\in \mathcal{M}$.

Finally, we prove the uniqueness of the pair $\left(s_{u}, t_{u}\right)$.  We claim that there is a unique positive constant $s_{u}$ such that $f_{s}\left(s_{u}, t_{u}\right)=$ 0. In fact, by contradiction, suppose there exist $s_{u_{1}}$ and $s_{u_{2}}$ with $s_{u_{1}}<s_{u_{2}}$ such that $f_{s}\left(s_{u_{1}}, t_{u}\right)=0$ and $ f_{s}\left(s_{u_{2}}, t_{u}\right)=0$. Therefore, we have
$$
\begin{aligned}
& 
s_{u_{1}}^{2 p-1} \int_{V}\left(R_{\alpha}*\left|u^{+}\right|^{p}\right)\left|u^{+}\right|^{p} \,d\mu+s_{u_{1}}^{p-1} t_u^{p} \int_{V}\left(R_{\alpha}*\left|u^{+}\right|^{p}\right)\left|u^{-}\right|^{p} \,d\mu \\ = & s_{u_{1}}\left\|u^{+}\right\|^{2}+b s_{u_{1}}^{3}\|\nabla u^{+}\|_{2}^{4}-\frac{a}{2}t_uK_{V}(u)+\frac{b}{2}s_{u_{1}}t_u^2K^2_V(u)+b s_{u_{1}} t_u^{2}\left\|\nabla u^{+}\right\|_{2}^{2}\left\|\nabla u^{-}\right\|_{2}^{2} \\&-\frac{b}{2}t_uK_V(u)\left(s_{u_{1}}^2\|\nabla u^{+}\|_{2}^{2}+t_u^2\|\nabla u^{-}\|_{2}^{2}\right)-bs_{u_{1}}^2t_uK_V(u)\|\nabla u^{+}\|_{2}^{2},
\end{aligned}
$$
and
$$
\begin{aligned}
& s_{u_{2}}^{2 p-1} \int_{V}\left(R_{\alpha}*\left|u^{+}\right|^{p}\right)\left|u^{+}\right|^{p} \,d\mu+s_{u_{2}}^{p-1} t_u^{p} \int_{V}\left(R_{\alpha}*\left|u^{+}\right|^{p}\right)\left|u^{-}\right|^{p} \,d\mu \\ = & s_{u_{2}}\left\|u^{+}\right\|^{2}+b s_{u_{2}}^{3}\|\nabla u^{+}\|_{2}^{4}-\frac{a}{2}t_uK_{V}(u)+\frac{b}{2}s_{u_{2}}t_u^2K^2_V(u)+b s_{u_{2}} t_u^{2}\left\|\nabla u^{+}\right\|_{2}^{2}\left\|\nabla u^{-}\right\|_{2}^{2} \\&-\frac{b}{2}t_uK_V(u)\left(s_{u_{2}}^2\|\nabla u^{+}\|_{2}^{2}+t_u^2\|\nabla u^{-}\|_{2}^{2}\right)-bs_{u_{2}}^2t_uK_V(u)\|\nabla u^{+}\|_{2}^{2}.
\end{aligned}
$$
Then the above two formulas imply that
\begin{align*}
& 0>\left(s_{u_{1}}^{2 p-4}-s_{u_{2}}^{2 p-4}\right) \int_{V}\left(R_{\alpha}*\left|u^{+}\right|^{p}\right)\left|u^{+}\right|^{p} \,d\mu+\left(s_{u_{1}}^{p-4}-s_{u_{2}}^{p-4}\right) t_{u}^{p} \int_{V}\left(R_{\alpha}*\left|u^{+}\right|^{p}\right)\left|u^{-}\right|^{p} \,d\mu\\=&
\left(\frac{1}{s_{u_{1}}^{2}}-\frac{1}{s_{u_{2}}^{2}}\right)\left\|u^{+}\right\|^{2}-\frac{a}{2}\left(\frac{1}{s_{u_{1}}^{3}}-\frac{1}{s_{u_{2}}^{3}}\right)t_uK_{V}(u)+\frac{b}{2}\left(\frac{1}{s_{u_{1}}^{2}}-\frac{1}{s_{u_{2}}^{2}}\right) t_{u}^{2}K^2_V(u)\\&+b\left(\frac{1}{s_{u_{1}}^{2}}-\frac{1}{s_{u_{2}}^{2}}\right)t_u^{2}\left\|\nabla u^{+}\right\|_{2}^{2}\left\|\nabla u^{-}\right\|_{2}^{2} -\frac{b}{2}t_uK_V(u)\left[\left(\frac{1}{s_{u_{1}}}-\frac{1}{s_{u_{2}}}\right)\|\nabla u^{+}\|_{2}^{2}+\left(\frac{1}{s_{u_{1}}^{3}}-\frac{1}{s_{u_{2}}^{3}}\right) t_{u}^{2}\|\nabla u^{-}\|_{2}^{2}\right]\\&-b\left(\frac{1}{s_{u_{1}}}-\frac{1}{s_{u_{2}}}\right) t_uK_V(u)\|\nabla u^{+}\|_{2}^{2}\\>&0, 
\end{align*}
where we have used the facts $s_{u_{1}}<s_{u_{2}}$, $p>4$ and $K_V(u)<0$. This is impossible. Hence there is a unique $s_{u}>0$ such that $f_{s}(s_u, t_u)=0.$ The claim is completed. By similar arguments as above, we obtain that there is a unique $t_{u}>0$ such that $f_{s}(s_u, t_u)=0.$ Hence there exists a unique positive constant pair $(s_u,t_u)$ such that $\left(s_{u} u^{+}+t_{u} u^{-}\right)\in \mathcal{M}$. 
\end{proof}

\begin{lm}\label{l9}
Let $p>4$, $(h_1)$ and $(h_3)$ hold. Then $m>0$ is achieved by some minimizer $u_0\in\mathcal{M}$.
\end{lm}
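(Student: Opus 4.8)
The plan is to realize the minimizer as a projection of the weak limit of a minimizing sequence for $m$, using the compactness provided by $(h_1)$ and $(h_3)$ through Lemma \ref{lgg} to exclude vanishing of the positive and negative parts, and then the fibering map of Lemma \ref{4} to land the limit on $\mathcal{M}$ at level $m$. To see $m>0$, note that every $u\in\mathcal{M}$ satisfies $(J'(u),u^+)=(J'(u),u^-)=0$, hence $(J'(u),u)=0$, so $\mathcal{M}\subset\mathcal{N}$ and Lemma \ref{lgj} gives $\|u\|\ge\eta$. Since on $\mathcal{N}$ one has $J(u)=J(u)-\frac14(J'(u),u)=\frac14\|u\|^2+\left(\frac14-\frac1{2p}\right)\int_V(R_\alpha\ast|u|^p)|u|^p\,d\mu\ge\frac14\|u\|^2$, the coefficient being positive because $p>4$, it follows that $m\ge\frac14\eta^2>0$.

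Next I would take a minimizing sequence $\{u_k\}\subset\mathcal{M}$ with $J(u_k)\to m$. The same identity $J(u_k)\ge\frac14\|u_k\|^2$ shows $\{u_k\}$ is bounded in $H$, so by Lemma \ref{lgg}, up to a subsequence, $u_k\rightharpoonup u_0$ in $H$, $u_k\to u_0$ pointwise and in every $\ell^q$ with $q\ge2$; consequently $u_k^\pm\to u_0^\pm$ pointwise and in $\ell^q$, and $u_k^\pm\rightharpoonup u_0^\pm$ in $H$. The crucial point is that $u_0^\pm\neq0$: from $(J'(u_k),u_k^+)=0$ together with $K_V(u_k)\le0$ (Proposition \ref{o}), every Kirchhoff and potential contribution is nonnegative, so $\|u_k^+\|^2\le\int_V(R_\alpha\ast|u_k|^p)|u_k^+|^p\,d\mu\le C\|u_k\|_{\frac{6p}{3+\alpha}}^p\|u_k^+\|_{\frac{6p}{3+\alpha}}^p$ by the HLS inequality (\ref{f1}). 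Boundedness of $\|u_k\|$ then forces $\|u_k^+\|_{\frac{6p}{3+\alpha}}\ge\eta'>0$ uniformly in $k$, and since $u_k^+\to u_0^+$ in $\ell^{\frac{6p}{3+\alpha}}$ this yields $u_0^+\neq0$; symmetrically $u_0^-\neq0$.

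Finally, because $u_0^\pm\neq0$, Lemma \ref{4} provides a unique pair $(s_0,t_0)$ with $w:=s_0u_0^++t_0u_0^-\in\mathcal{M}$, so $J(w)\ge m$. For the reverse inequality I would use that $u_k\in\mathcal{M}$ forces, by the uniqueness in Lemma \ref{4}, the maximum of $(s,t)\mapsto J(su_k^++tu_k^-)$ to be attained at $(1,1)$; hence $J(u_k)\ge J(s_0u_k^++t_0u_k^-)=:J(w_k)$. Since $w_k\rightharpoonup w$ in $H$ and $w_k\to w$ in $\ell^{\frac{6p}{3+\alpha}}$, the quadratic and quartic parts of $J$ are weakly lower semicontinuous while the Choquard term passes to the limit (continuity of the HLS functional, cf. Lemma \ref{03}), so $\liminf_k J(w_k)\ge J(w)$. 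Chaining the inequalities, $m=\lim_k J(u_k)\ge\liminf_k J(w_k)\ge J(w)\ge m$, whence $J(w)=m$ and $w\in\mathcal{M}$ is the sought minimizer.

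The main obstacle is the nonvanishing step $u_0^\pm\neq0$: without periodicity one cannot invoke the translation argument used for Theorem \ref{t2}, and it is precisely hypothesis $(h_3)$, via the compact embedding of Lemma \ref{lgg}, that upgrades weak convergence to strong $\ell^q$ convergence and keeps the positive and negative parts from escaping to infinity. A secondary technical point is that it is the corrected function $w$, not the bare weak limit $u_0$, that lies on $\mathcal{M}$, so the maximality characterization of Lemma \ref{4} must be exploited to compare the energies $J(u_k)$ and $J(w_k)$ before passing to the limit.
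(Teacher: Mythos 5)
Your proof is correct and follows essentially the same route as the paper: bounded minimizing sequence, nonvanishing of $u_0^{\pm}$ via the Nehari identities $(J'(u_k),u_k^{\pm})=0$ together with the sign of $K_V$ and the HLS inequality, the compact embedding supplied by $(h_3)$, and projection of the weak limit onto $\mathcal{M}$ by Lemma \ref{4} combined with the maximality of $(s,t)\mapsto J(su_k^{+}+tu_k^{-})$ at $(1,1)$. The only (harmless) deviation is your argument for $m>0$, which uses $\mathcal{M}\subset\mathcal{N}$ and Lemma \ref{lgj} directly, whereas the paper extracts positivity from the limit of the Kirchhoff term $\left(\frac14-\frac1{2p}\right)b\|\nabla u\|_2^4$; both are valid.
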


\begin{proof} 
Let $\{u_k\}\subset\mathcal{M}$ be a minimizing sequence such that
$$\lim\limits_{k\rightarrow\infty}J(u_k)=m.$$
Since $\{u_k\}\subset\mathcal{M}$, we get that $$(J'(u_k),u_k)=(J'(u_k),u_k^+)+(J'(u_k),u_k^-) =0.$$
A direct calculation yields that
$$
\begin{aligned}
\lim _{k \rightarrow\infty} J\left(u_{k}\right) & =\lim _{k \rightarrow\infty}\left[J\left(u_{k}\right)-\frac{1}{2p} (J^{\prime}\left(u_{k}\right), u_{k})\right] \\
& =\lim _{k \rightarrow\infty}\left[(\frac{1}{2}-\frac{1}{2p})\|u_k\|^{2}+(\frac{1}{4}-\frac{1}{2p})b\left\|\nabla u_k\right\|_{2}^{4}\right] \\
& = m.
\end{aligned}
$$
Then we have
$$(\frac{1}{2}-\frac{1}{2p})\|u_k\|^{2}\leq (\frac{1}{2}-\frac{1}{2p})\|u_k\|^{2}+(\frac{1}{4}-\frac{1}{2p})b\left\|\nabla u_k\right\|_{2}^{4}=m+o_k(1),$$
which implies that $\left\{u_{k}\right\}$ is bounded in $H$. 
By the HLS inequality (\ref{f1}), we have that
$$
\|u_k\|^{2}\leq\|u_k\|^{2}+b\left\|\nabla u_k\right\|_{2}^{4}=\int_{V}\left(R_{\alpha}*\left|u_k\right|^{p}\right)\left|u_k\right|^{p} \,d\mu\leq C\|u_k\|_{\frac{6p}{3+\alpha}}^{2p}\leq C_1\|u_k\|^{2p},
$$
which implies that $\|u_k\|\geq C>0$.
By Lemma \ref{lgg}, there exists $u\in H$ such that $u\neq 0$.
Moreover, by the boundedness of $\{u_k\}$ in $H$, Lemma \ref{ln}, Lemma \ref{lgg} and (\ref{0.2}), one gets that
$$\lim\limits_{k\rightarrow\infty}\int_{V}\left|\nabla u_{k}\right|^{2}\,d\mu=\int_{V}\left|\nabla u\right|^{2}\,d\mu,$$
and hence
\begin{equation}\label{8.3}
\lim\limits_{k\rightarrow\infty}\left(\int_{V}\left|\nabla u_{k}\right|^{2}\,d\mu \right)^2=\left(\int_{V}\left|\nabla u\right|^{2}\,d\mu \right)^2.
\end{equation}
 Then for $p>4$, we have that
 $$
\begin{aligned}
m=&\lim _{k \rightarrow\infty}\left[(\frac{1}{2}-\frac{1}{2p})\|u_k\|^{2}+(\frac{1}{4}-\frac{1}{2p})b\left\|\nabla u_k\right\|_{2}^{4}\right] \\ \geq 
& \lim _{k \rightarrow\infty}(\frac{1}{4}-\frac{1}{2p})b\left\|\nabla u_k\right\|_{2}^{4}\\=& (\frac{1}{4}-\frac{1}{2p})b\left\|\nabla u\right\|_{2}^{4}\\>& 0.
\end{aligned}
$$

Since $\{u_k\}$ is bonded in $H$, and hence $\{u_k^{\pm}\}$. Then by Lemma \ref{lgg}, there exists $u^{\pm}\in H$ such that 
\begin{equation}\label{kk}
\begin{cases}u_{k}^{\pm} \rightharpoonup u^{\pm}, & \text { weakly in } H, \\ u_{k}^{\pm} \rightarrow u^{\pm}, & \text { pointwise in } V, \\ u_{k}^{\pm} \rightarrow u^{\pm}, & \text { strongly in } \ell^{q}(V), ~q \in[2,\infty].\end{cases}
\end{equation}
 Moreover, note that
\begin{equation*}\label{3.8}
(J'(u_k),u_k^{\pm})=\|u_k^{\pm}\|^{2}+b\left\|\nabla u_k\right\|_{2}^{2}\left(\|\nabla u_k^{\pm}\|_{2}^{2}-\frac{1}{2}K_V(u_k)\right)-\frac{a}{2}K_{V}(u_k)-\int_{V}\left(R_{\alpha}*\left|u_k\right|^{p}\right)\left|u_k^{\pm}\right|^{p} \,d\mu=0.
\end{equation*}
Then by (\ref{ac}) and (\ref{f1}), we get that
$$
\begin{aligned}
C_0\|u_k^{\pm}\|_{\frac{6p}{3+\alpha}}^{2}\leq& \|u_k^{\pm}\|^{2}\\ < &\|u_k^{\pm}\|^{2}+b\left\|\nabla u_k\right\|_{2}^{2}\left(\|\nabla u_k^{\pm}\|_{2}^{2}-\frac{1}{2}K_V(u_k)\right)-\frac{a}{2}K_{V}(u_k)\\=&\int_{V}\left(R_{\alpha}*\left|u_k\right|^{p}\right)\left|u_k^{\pm}\right|^{p} \,d\mu\\ \leq &C\|u_k\|_{\frac{6p}{3+\alpha}}^{p}\|u_k^{\pm}\|_{\frac{6p}{3+\alpha}}^{p}\\\leq &C_1\|u_k^{\pm}\|_{\frac{6p}{3+\alpha}}^{p}.
\end{aligned}
$$
Since $u_{k}^{ \pm} \neq 0$, one has that
$$
\left\|u_{k}^{ \pm}\right\|_{\frac{6p}{3+\alpha}}\geq C>0,
$$
which implies that $u^{\pm}\neq 0.$

Since $\{u_k^{\pm}\}$ is bounded in $H$, by Lemma \ref{03}, (\ref{f1}) and (\ref{kk}), we get that
\begin{equation}\label{9.1}
\lim\limits_{k\rightarrow\infty}\int_{V}\left(R_{\alpha} *\left|u_{k}^{ \pm}\right|^{p}\right) \left|u_{k}^{ \pm}\right|^{p}\,d\mu=\int_{V}\left(R_{\alpha} *\left|u^{ \pm}\right|^{p}\right) \left|u^{ \pm}\right|^{p}\,d\mu.
\end{equation}
Now we prove
\begin{equation}\label{9.2}
\lim\limits_{k\rightarrow\infty}\int_{V}\left(R_{\alpha} * \left|u_{k}^{+}\right|^{p}\right)\left|u_{k}^{-}\right|^{p}\,d\mu =\int_{V}\left(R_{\alpha} *\left|u^{+}\right|^{p}\right) \left|u^{-}\right|^{p}\,d\mu.
\end{equation}
In fact, by an elementary inequality $|a^s-b^s|\leq C|a-b|(a^{s-1}+b^{s-1})$ with $a,b\geq 0, s\geq 1$, the HLS inequality (\ref{f1}), H\"{o}lder inequality and the boundedness of $\{u_k^{\pm}\}$ in $H$, we get that
$$\begin{aligned}
&\left|\int_{V}\left(R_{\alpha} * \left|u_{k}^{+}\right|^{p}\right)\left|u_{k}^{-}\right|^{p}\,d\mu -\int_{V}\left(R_{\alpha} *\left|u^{+}\right|^{p}\right) \left|u^{-}\right|^{p}\,d\mu\right|\\ \leq &
\int_{V}\left(R_{\alpha}* |u_k^+|^p\right) \left||u_{k}^-|^p-|u^-|^p\right|\,d\mu +\int_{V}\left(R_{\alpha} *\left|u^{-}\right|^{p}\right) \left||u_k^{+}|^p-|u^+|^p\right|\,d\mu\\\leq &C\int_{V}\left(R_{\alpha}* |u_k^+|^p\right)|u_k^--u^-| \left(|u_{k}^-|^{p-1}+|u^-|^{p-1}\right)\,d\mu \\&+C\int_{V}\left(R_{\alpha} *\left|u^{-}\right|^{p}\right)|u_k^+-u^+|\left(|u_k^{+}|^{p-1}+|u^+|^{p-1}\right)\,d\mu\\ \leq &
C\|u_k^+\|^p_{\frac{6p}{3+\alpha}}\left(\|u_k^-\|^{p-1}_{\frac{6p}{3+\alpha}}+\|u^-\|^{p-1}_{\frac{6p}{3+\alpha}}\right)\|u_k^--u^-\|_{\frac{6p}{3+\alpha}}\\&+C\|u^-\|^p_{\frac{6p}{3+\alpha}}\left(\|u_k^+\|^{p-1}_{\frac{6p}{3+\alpha}}+\|u^+\|^{p-1}_{\frac{6p}{3+\alpha}}\right)\|u_k^+-u^+\|_{\frac{6p}{3+\alpha}}\\\leq & C\|u_k^--u^-\|_{\frac{6p}{3+\alpha}}+C\|u_k^+-u^+\|_{\frac{6p}{3+\alpha}}\\ \rightarrow& ~0.
\end{aligned}
$$
Similar to (\ref{8.3}), we have that
\begin{equation}\label{8.0}
\lim\limits_{k\rightarrow\infty}\left(\int_{V}\left|\nabla u_{k}^{ \pm}\right|^{2}\,d\mu \right)^2=\left(\int_{V}\left|\nabla u^{ \pm}\right|^{2}\,d\mu \right)^2.
\end{equation}
Moreover, by (\ref{0.2}), (\ref{kk}) and the boundedness of $\{u_k^{\pm}\}$ in $H$, we have that
\begin{equation}\label{8.1}
\lim\limits_{k\rightarrow\infty}\int_{V}\left|\nabla u_{k}^{+}\right|^{2}\,d\mu \int_{V}\left|\nabla u_{k}^{-}\right|^{2}\,d\mu=\int_{V}\left|\nabla u^{+}\right|^{2}\,d\mu \int_{V}\left|\nabla u^{-}\right|^{2}\,d\mu.
\end{equation}
Since $u^{\pm}\neq 0$, by Lemma \ref{4}, there exists a unique pair $(s_{u}, t_{u})$ with $s_{u}, t_{u}>0$ such that $s_{u}u^{+}+t_{u}u^{-} \in \mathcal{M}$. By (\ref{kk})-(\ref{8.1}), Fatou lemma and Lemma \ref{4}, we obtain that
$$
\begin{aligned}
m& \leq J(s_{u} u^{+}+t_{u} u^{-}) \\
& \leq \liminf _{k\rightarrow\infty} J\left(s_{u} u_{k}^{+}+t_{u} u_{k}^{-}\right)  \\
& \leq \lim\limits_{k\rightarrow\infty} J\left(u_{k}\right) \\
& =m
\end{aligned}
$$
Therefore, we complete the proof by letting $u_0=s_{u} u^{+}+t_{u} u^{-}$.

\end{proof}

{\bf Proof of Theorem \ref{t0}.}  We only need to show that if $u_0\in\mathcal{M}$ with $u_0=s_uu^++t_uu^-$ satisfies $J(u_0)=m$, then $u_0$ is a sign-changing solution to the equation (\ref{aa}), that is, $J^{\prime}(u_0)=0$.

We assume by contradiction that $u_0 \in \mathcal{M}$ with $J(u_0)=m$, but $u$ is not a solution of the equation (\ref{aa}). Then we can find a function $\phi \in C_{c}(V)$ such that $\left(J'(u_0),\phi\right)= -2.$
This implies that, for some $\delta>0$ small enough,
\begin{equation}\label{7.0}
\left(J^{\prime}\left(s u^{+}+t u^{-}+\tau\phi\right), \phi\right) \leq-1,\quad \left|s-s_{u}\right|+\left|t-t_{u}\right|+|\tau| \leq \delta.
\end{equation}

Let $\eta: D\rightarrow[0,1]$ be a cut-off function satisfying
\[
\eta(s, t)=\left\{\begin{array}{l}
1, ~\text {if}~\left(s-s_{u}\right)^{2}+\left(t-t_{u}\right)^{2} \leq\left(\frac{\delta}{4}\right)^{2},  \\
0, ~\text {if}~\left(s-s_{u}\right)^{2}+\left(t-t_{u}\right)^{2} \geq\left(\frac{\delta}{2}\right)^{2},
\end{array}\right.
\]
where $D=\{(s,t):\left(s-s_{u}\right)^{2}+\left(t-t_{u}\right)^{2} \leq\delta^{2}\}.$

We consider $J'\left(su^++tu^-+\delta\eta(s,t)\phi\right)$. For $(s,t)\in D$, by (\ref{7.0}), we have
\begin{align}\label{7.1}
J(s u^{+}+t u^{-}+\delta \eta(s,t)\phi) & =J\left(s u^{+}+t u^{-}\right)+\int_{0}^{1}\left(J^{\prime}\left(s u^{+}+t u^{-}+\tau \delta\eta\left(s, t\right) \phi\right), \delta\eta\left(s, t\right) \phi\right) d \tau \nonumber\\
& \leq J\left(s u^{+}+t u^{-}\right)-\delta\eta\left(s, t\right).
\end{align}

For $(s, t) \in D,$ we denote $$v(s,t)=s u^{+}+t u^{-}+\delta \eta(s,t)\phi$$ and 
$$
\Phi(s, t)=\left(\left( J^{\prime}(v), v^{+}\right),\left(J^{\prime}(v), v^{-}\right)\right).
$$

If $\left(s-s_{u}\right)^{2}+\left(t-t_{u}\right)^{2}=\delta^{2}$, then $v(s,t)=s u^{+}+t u^{-}$. By Lemma \ref{4}, we have that $\Phi(s, t) \neq(0,0)$. As a result, the Brouwer topological degree $\operatorname{deg}(\Phi, \operatorname{int}(D), 0)$ is well-defined and $\operatorname{deg}(\Phi, \operatorname{int}(D), 0)=1$. Hence there exists a pair of positive number $\left(s_{0}, t_{0}\right)\in \operatorname{int}(D)$ such that $\Phi(s_0, t_0)=(0,0)$. This means that $v(s_0,t_0)\in \mathcal{M}$, and hence
\begin{equation}\label{7.3}
J(v(s_0,t_0))\geq m.
\end{equation}

If $\left(s_{0}, t_{0}\right) \neq\left(s_{u}, t_{u}\right)$, then by Lemma \ref{4}, $J\left(s_{0} u^{+}+t_{0} u^{-}\right)<J\left(s_{u} u^{+}+t_{u} u^{-}\right)=m$. Hence by (\ref{7.1}), we get that
$$
J\left(v\left(s_{0}, t_{0}\right)\right) \leq J\left(s_{0} u^{+}+t_{0} u^{-}\right)<m.
$$

If $\left(s_{0}, t_{0}\right)=\left(s_{u}, t_{u}\right)$, then $\eta(s_0,t_0)=1$. By (\ref{7.1}), we have
$$
J\left(v\left(s_{0}, t_{0}\right)\right) \leq J\left(s_{0} u^{+}+t_{0} u^{-}\right)-\delta \leq m-\delta<m.
$$
In any case, we have $J\left(v\left(s_{0}, t_{0}\right)\right)<m$, which contradicts (\ref{7.3}). The proof is completed.\qed

\
\

{\bf Declarations}

\
\

{\bf Conflict of interest:} The author declares that there are no conflicts of interests regarding the publication of
this paper.

\end{document}